\author{}
\newcommand{\captionfonts}{\footnotesize}
\long\def\@makecaption#1#2{%
  \vskip\abovecaptionskip
  \sbox\@tempboxa{{\captionfonts #1: #2}}%
  \ifdim \wd\@tempboxa >\hsize
    {\captionfonts #1: #2\par}
  \else
    \hbox to\hsize{\hfil\box\@tempboxa\hfil}%
  \fi
  \vskip\belowcaptionskip}
\newcommand{\nwc}{\newcommand}
\newtheorem{prop}{Proposition}[section]
\newtheorem{lemma}[prop]{Lemma}
\newtheorem{rem}[prop]{Remark}
\newtheorem{theorem}[prop]{Theorem}
\nwc{\R}{\mathbb R}
\nwc{\Z}{\mathbb Z}
\nwc{\N}{\mathbb N}
\newcommand{\ignore}[1]{}
\nwc{\eps}{\varepsilon}
\nwc{\re}{Re\,}
\nwc{\wto}{\rightharpoonup}
\nwc{\ds}{\displaystyle}
\newcommand {\bedis} {\begin{displaymath}}
\newcommand {\edis} {\end{displaymath}}
\newcommand{\newbeqna} {\renewcommand {\arraystretch} {2}
                        \begin {displaymath} \begin {array}{crcl}}
\newcommand{\neweqna}{\end{array} \end {displaymath}}
\newcommand{\fbeqna}{\renewcommand {\arraystretch} {1.3}
\begin {displaymath}\begin{array}{rcll}}
\newcommand{\feqna}{\end{array}\end{displaymath}}
\newcommand {\beqna} {\begin{eqnarray*}}
\newcommand {\eqna} {\end{eqnarray*}}
\newcommand {\beqn} {\begin{eqnarray}}
\newcommand {\eqn} {\end{eqnarray}}
\begin{document}
\title{Uniqueness of self-similar solutions to Smoluchowski's coagulation 
equations for kernels that are close to constant}

\author{%
B. Niethammer%
\footnote{Institute of Applied Mathematics,
University of Bonn, Endenicher Allee 60, 53115 Bonn, Germany}
and
J. J. L. Vel\'{a}zquez\footnote{Institute of Applied Mathematics,
University of Bonn, Endenicher Allee 60, 53115 Bonn, Germany}
}
\date{}

\maketitle
\begin{abstract}
We consider self-similar solutions to Smoluchowski's coagulation equation for kernels $K=K(x,y)$ that are homogeneous of degree zero and
close to constant in the sense
that
\[
 -\eps \leq K(x,y)-2 \leq \eps \Big( \Big(\frac{x}{y}\Big)^{\alpha} + \Big(\frac{y}{x}\Big)^{\alpha}\Big)
\]
for $\alpha \in [0,1)$. We prove that self-similar solutions with given mass are unique if $\eps$ is sufficiently small which is the first
such uniqueness result for kernels that are not solvable. Our proof relies on a contraction argument in a norm that measures the distance of solutions
with respect to the weak topology of measures. 
 
\end{abstract}

{\bf Keywords:} Smoluchowski's coagulation equations,  self-similar solutions, uniqueness

\section{Introduction}

In this article we consider self-similar solutions to  Smoluchowski's mean-field model for coagulation. The model applies to a system of particles in which
at any time two particles can coagulate to form a larger particle. If $\phi(\xi,t)$ denotes the number density of particles of size $\xi>0$ at time $t$, then $\phi$
satisfies the following nonlocal integral equation.
\begin{equation}\label{smolu1}
 \partial_t \phi(\xi,t) = \frac 1 2 \int_0^\xi K(\xi{-}\eta,\eta) \phi(\xi{-}\eta,t) \phi(\eta,t)\,d\eta - \phi(\xi,t) \int_0^{\infty} K(\xi,\eta) \phi(\eta,t)\,d\eta =: Q[\phi](\xi)\,.
\end{equation}
Here $K(\xi,\eta)$ denotes the so-called rate kernel, a nonnegative and symmetric function, 
that describes the rate at which particles of size $\xi$ and $\eta$ coagulate. The kernel $K$ depends  on the
microscopic details of the coagulation process and many different type of kernels can be found in the applied literature (see for example \cite{Aldous99,Drake72} and the references therein).
Most notable is Smoluchowski's kernel
\[
 K(\xi,\eta) = K_0 \Big( \xi^{1/3} + \eta^{1/3}\Big) \Big( \xi^{-1/3} + \eta^{-1/3}\Big)\,,
\]
that has been derived in Smoluchowski's original paper \cite{Smolu16} to describe coagulation in a homogeneous colloidal gold solution. The main assumptions in the derivation
are that particles are spherical, diffuse by Brownian motion when they are well-separated and coagulate quickly when two particles become close. Then $
 \Big( \xi^{1/3} + \eta^{1/3}\Big)$ is proportional to the diameter of two particles of volume $\xi$ and $\eta$ respectively, whereas $\Big( \xi^{-1/3} + \eta^{-1/3}\Big) $ is due
to Einstein's formula proportional to the diffusion constant. 

An important aspect of solutions to \eqref{smolu1} is mass conservation. Since mass is neither created or destroyed on a microscopic level, one would expect that the same is
true on the macroscopic level, that is  solutions of \eqref{smolu1} should satisfy
\begin{equation}\label{massconservation}
 \int_0^{\infty} \xi \phi(\xi,t)\,d\xi =\int_0^{\infty} \xi \phi(\xi,0)\,d\xi \qquad \mbox{ for all } t >0\,.
\end{equation}
In fact, integrating \eqref{smolu1}  and exchanging the order of integration, one finds formally \eqref{massconservation}. However, it is well-known by now
that \eqref{massconservation} is not true in general. It has first been established for the multiplicative kernel $K(x,y)=xy$, (see e.g. \cite{McLeod62a})
 and later for more general kernels
which grow faster than linearly at infinity \cite{Jeon98,EMP02}, that there is a finite time $t_*\geq 0$, that depends on the kernel and on the initial data, such that
mass is conserved up to time $t_*$ and decays afterwards. This phenomenon is known as gelation and corresponds to the creation of infinitely large clusters at the finite
time $t_*$. If, on the other hand, the kernel $K$ grows at most linearly at infinity, then mass-conservation of solutions has been established for a large range of
kernels (see e.g. \cite{Norris99,LauMisch02,LauMisch04}).

\medskip
A fundamental issue in the analysis of coagulation equations is the dynamic scaling hypothesis. It states  that for homogeneous kernels, solutions to \eqref{smolu1} converge
to a uniquely determined self-similar solution, either as time goes to infinity, or, in the case of gelation, as time approaches the gelation time. However, this issue is only
well-understood for the so-called solvable kernels, $K(x,y)=const, K(x,y)=x+y$ and $K(x,y)=xy$ for which explicit solution formulas are available. In fact, for those kernels
it has been established that there is one self-similar solution with finite mass, and convergence to this solution under some assumptions on the data has been established in 
a range of papers \cite{KreerPen94,DMR00,MePe04,LauMisch05,CMM10}. In \cite{MePe04} it was also established that in addition to self-similar
solutions with finite mass there exists  a family of self-similar solutions that have fat tails. Furthermore, in \cite{MePe04} 
the domains of attraction of all those solutions have been completely
characterized. However, the proofs of all these results rely on the use of the Laplace transform or on explicit formulas for the self-similar
solutions and cannot, at least not directly,  be extended to any other kernel.

More recently, some results on self-similar solutions to \eqref{smolu1} for kernels that are homogeneous with degree $\lambda<1$ have been established.
First, existence of self-similar profiles with finite mass for a large range of such kernels has been proved in \cite{FouLau05,EMR05} and properties of such solutions have
been investigated \cite{EsMisch06,FouLau06a,CanMisch11,NV11b}. In addition, the existence of self-similar solutions with fat tails has been
established for kernels that are bounded as $K(x,y) \leq C(x^{\lambda}+y^{\lambda})$ for $\lambda \in [0,1)$ in \cite{NV12a}. However, it has been
an open problem whether solutions with a given tail behaviour, are unique.
In this paper we present the first such result for non-solvable kernels. More precisely we prove that self-similar solutions with finite
mass are unique if the kernel $K$ is homogeneous with degree zero and
 is close to the constant kernel in the sense outlined below (see \eqref{kernel2}-\eqref{kernel0}).

To describe our result in detail we recall that self-similar solutions with finite mass to \eqref{smolu1} for kernels $K$ of homogeneity zero are given by
\begin{equation}\label{ss1}
 \phi(\xi,t) = t^{-2} f(x) \qquad \mbox{ with } x = \frac{\xi}{t}
\end{equation}
where $f$ satisfies
\begin{equation}\label{eq1}
 -xf'(x) - 2f(x) =Q[f](x)\,
\end{equation}
with
\begin{equation}\label{eq2}
 \int_0^{\infty} x f(x)\,dx =M\,.
\end{equation}
It is convenient to rewrite equation \eqref{eq1}  as 
\begin{equation}\label{eq1b}
 -\big(x^2 f(x)\big)'=x Q[f](x) = -\partial_x \int_0^x \int_{x-y}^{\infty} K(y,z) y f(z)f(y)\,dz\,dy
\end{equation}
and by integrating \eqref{eq1b} to reformulate \eqref{eq1} as   
\begin{equation}\label{eq1c}
 x^2 f(x) = \int_0^x \,dy \int_{x{-}y}^{\infty}\,dz K(y,z)y f(z)f(y)\,.
\end{equation}
We call $f$ a self-similar profile with finite mass to \eqref{smolu1} if $f \in L^1_{loc}(\R)$,  $f \geq 0$, $\int x f(x)\,dx < \infty$  and if $f$ satisfies \eqref{eq1c}
for almost all $x \in \R$. 
Notice also, that if $f$ is a solution to \eqref{eq1c}, then so is the rescaled function $g(x)=af(ax)$ for $a>0$. We can fix the parameter $a$
by fixing $M$ in \eqref{eq2}.

Our goal in this paper is to show that solutions to \eqref{eq1c} and \eqref{eq2} are unique if the kernel $K$ is close to the constant one. More precisely we
make the following
assumptions on the kernel:

We assume for the kernel $K\colon (0,\infty)^2 \to [0,\infty)$ 
that \begin{equation}\label{kernel1}
 K \mbox{ is homogeneous of degree zero, that is } K(\lambda x,\lambda y) = K(x,y) \; \mbox{ for all } x,y,\lambda>0\,.
\end{equation}
Furthermore we assume that there exists $\eps>0$ and $\alpha \in [0,1)$ such that
\begin{equation}\label{kernel2}
W(x,y):=K(x,y)-2  \geq -\eps\,,\qquad \mbox{ for all } x,y>0\,,
\end{equation}
\begin{equation}\label{kernel3}
 W(x,y) \leq \eps \Big( \Big(\frac{x}{y}\Big)^{\alpha} + \Big (\frac{y}{x}\Big)^{\alpha}\Big)\qquad \mbox{ for all } x,y>0
\end{equation}
and that  $K$ is differentiable with
\begin{equation}\label{kernel0}
 \Big | \frac{\partial}{\partial x} K(x,y) \Big| \leq  \frac{C\eps}{x} \Big( \Big(\frac{x}{y}\Big)^{\alpha} + \Big (\frac{y}{x}\Big)^{\alpha}\Big)\qquad \mbox{ for all } x,y>0\,.
\end{equation}
The last assumption could be weakened in the sense that it would suffice that a H\"older norm of $K$ is small locally with a certain blow-up rate as $x,y \to 0$. Assumption \eqref{kernel0}
is just somewhat easier to formulate and it is also satisfied (up to the smallness assumption) by kernels one typically encounters in applications.

\begin{theorem}\label{T.uniqueness}
Assume that $K$ satisfies the assumptions \eqref{kernel1}-\eqref{kernel0} and let $f_1$ and $f_2$ be two self-similar profiles  that satisfy \eqref{eq2}. 
Then, if  $\eps$ is sufficiently small, we have $f_1=f_2$. 
\end{theorem}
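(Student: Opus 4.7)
My plan is a contraction argument for the difference $g=f_1-f_2$, built around the explicit reference profile $f^{*}(x)=\frac{1}{M}e^{-x/M}$ of the constant kernel $K\equiv 2$. Writing $K=2+W$ and subtracting \eqref{eq1c} for $f_1$ and $f_2$ gives the linear integral identity
\[
x^{2} g(x)=\int_{0}^{x}\int_{x-y}^{\infty}K(y,z)\,y\bigl[g(z)f_{1}(y)+f_{2}(z)g(y)\bigr]\,dz\,dy=:\mathcal{L}_{K}[g](x),
\]
with the side constraint $\int_{0}^{\infty}x\,g(x)\,dx=0$ inherited from \eqref{eq2}. My goal is to exhibit a Banach norm $\|\cdot\|_{*}$ on this mass-free subspace such that $g\mapsto x^{-2}\mathcal{L}_{K}[g]$ is a strict contraction when $\eps$ is small, forcing $g=0$.

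Before attempting the contraction, I would gather quantitative a priori information on any self-similar profile $f$ under \eqref{kernel1}--\eqref{kernel0}: near-exponential decay $f(x)\le Ce^{-cx}$ at infinity, a locally integrable bound near zero, and quantitative $O(\eps)$ closeness to the reference profile $f^{*}$ in a suitable weighted norm. These estimates should follow by iterating \eqref{eq1c} together with the moment constraint \eqref{eq2} and the lower bound $K\ge 2-\eps$, in the spirit of the existence/regularity theory for self-similar profiles already available in the literature. They are needed both to make $f_{1},f_{2}$ usable as near-exponential weights inside $\mathcal{L}_{K}$ and to pin down the scaling zero mode of the constant-kernel linearization.

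I would then decompose $\mathcal{L}_{K}=\mathcal{L}_{0}+\mathcal{L}_{W}$ along $K=2+W$. The perturbative piece $\mathcal{L}_{W}$ is the easy part: combining \eqref{kernel3} with the exponential tails of $f_{i}$ yields $\|x^{-2}\mathcal{L}_{W}g\|_{*}\le C\eps\,\|g\|_{*}$, the hypothesis $\alpha<1$ being what ensures integrability of the weight $(x/y)^{\alpha}+(y/x)^{\alpha}$ against the profiles near the diagonal. The heart of the argument is to show that $x^{-2}\mathcal{L}_{0}$, once the one-parameter rescaling family $a\mapsto a f^{*}(a\,\cdot)$ has been quotiented out by the mass constraint, is contractive in $\|\cdot\|_{*}$ with a gap that is independent of $\eps$. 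Since $K=2$ is solvable, this should be accessible by working with a primitive such as $G(x)=\int_{0}^{x}y\,g(y)\,dy$, which vanishes at $0$ and at $\infty$ under the mass constraint, and diagonalising $\mathcal{L}_{0}$ via the Laplace transform.

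The hard part, I expect, is the choice of $\|\cdot\|_{*}$: it must be weak enough that $\mathcal{L}_{W}$ acts boundedly despite the singular prefactor in \eqref{kernel3}, strong enough that $\mathcal{L}_{0}$ has a quantitative spectral gap after the scaling mode is removed, and compatible with the regularity bound \eqref{kernel0}. A natural candidate is a dual-Lipschitz / weak-measure norm on the measure $xg\,dx$, equipped with a weight carrying a power-law singularity at $0$ (matched to the exponent $\alpha$) and an exponential tail at $\infty$ (matched to $f^{*}$); the restriction $\alpha<1$ is essentially what makes this balance possible. Once a suitable norm is in place, Theorem \ref{T.uniqueness} follows by the Banach fixed-point theorem applied to the identity $g=x^{-2}\mathcal{L}_{K}[g]$ on the mass-free subspace.
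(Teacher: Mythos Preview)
Your overall architecture (contraction on a weak norm, Laplace-side analysis, a~priori exponential decay and closeness to $e^{-x}$) matches the paper, but the proposal mislocates the difficulty in a way that would make the argument fail as written.

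First, your claim that $x^{-2}\mathcal L_0$ is a strict contraction on the mass-free subspace is not correct, and this is not how the paper handles the constant-kernel part. On the Laplace side the equation for $U_i=Q_i-\bar Q$ reads $-qU_i'=(2\bar Q-1)U_i+U_i^2+\mathcal M(f_i,f_i)$. The linear multiplier $2\bar Q-1=\frac{q-1}{q+1}$ is not small in any weighted sup norm; it equals $-1$ at $q=0$, $+1$ at $q=\infty$, and blows up at $q=-1$. What the paper does instead is \emph{invert} this linear part exactly via the integrating factor $(1+q)^2/q$, obtaining a Duhamel formula for $U$ in terms of the remainder $U^2+\mathcal M$. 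The smallness then comes not from $\mathcal L_0$ being contractive but from $\|U_i\|\le\delta$ (Lemma~\ref{L.qgloballyclose}), which makes $U_1^2-U_2^2$ small. For this step to work the paper renormalises each $f_i$ so that the singularity of $Q_i$ sits exactly at $q=-1$ (Lemma~\ref{L.singularity}), \emph{not} so that the mass is fixed; only then is the norm $\|Q\|=\sup_{q>-1}\tfrac{1+q}{|q|}|Q(q)|$ finite and adapted to the integrating factor. Your mass normalisation would leave the singularity at some $q^*(\eps)$ and break this mechanism.

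Second, and more seriously, you have the roles of $\mathcal L_0$ and $\mathcal L_W$ reversed. The $W$-contribution is not the easy part: it is exactly Proposition~\ref{P.main}, the core technical result of the paper. A direct bound of the type $\|x^{-2}\mathcal L_W g\|_*\le C\eps\|g\|_*$ does not follow simply from \eqref{kernel3} and the exponential tails, because the relevant norm has to be weak (measure-level) while $W$ carries the singular factor $(x/y)^\alpha+(y/x)^\alpha$, and the estimate must be uniform up to the singularity $q\to-1$. The paper cannot close this directly and instead argues by contradiction and compactness: assuming failure, extracting sequences $W_n,f_{i,n},q_n$, and splitting into the cases $q_n\to q^*\in(-1,\infty]$ and $q_n\to-1$, the latter requiring a further rescaling $X=(1+q_n)x$, $g(X)=e^xf(x)$ and a separate set of integrability estimates (Lemmas~\ref{L.gestimates}--\ref{L.gconverge}). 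None of this is hinted at in your outline, and your one-line justification for the $\mathcal L_W$ bound would not survive the $q\to-1$ regime.
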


The key ingredients of our proof are the following. In Section \ref{S.apriori} we collect several a priori estimates. 
First, we need certain regularity
of the solutions as $x\to 0$ and it is for those  estimates that we need a uniform lower bound on the kernel. In fact, it is known that for kernels that are not uniformly bounded
away from zero (e.g. the diagonal kernel) solutions have less regularity than what we need for our proof. In order to derive these results and also for the contraction argument
in the uniqueness proof we consider, as in \cite{MePe04} for the solvable kernels, the desingularized Laplace transform of $f$, for which we can derive an approximate differential
equation (see Lemma \ref{L.Qequation}).
Another key estimate is  that any self-similar solution with finite mass decays exponentially as $x \to \infty$ (cf. Lemma \ref{L.ublargex} and Lemma \ref{C.ublargex}).
 This result and more detailed estimates for the behaviour for large
$x$ is contained in  \cite{NV13a}. For completeness we present the proof of the upper bound that is needed here in the Appendix.
 In Lemma \ref{L.qclose} we show that the self-similar solution is close to the one for the constant kernel in the sense that their Laplace transforms are 
close. 
The contraction argument that gives uniqueness is contained in Section \ref{S.uniqueness}. Again, the key idea is to consider a suitable norm (cf. \eqref{normdef}) that is
a weighted norm of the desingularized Laplace transform and hence measures the distance of solutions in the weak topology.

\bigskip
For the following it is convenient to use the normalization $M=1$ in \eqref{eq2}, such that the self-similar solution for $K=2$ is $f(x)=e^{-x}$.

\section{A priori estimates }\label{S.apriori}

\subsection{Properties of the Laplace transform}

For the following we use what is sometimes called the desingularized Laplace transform of $f$, given by
\begin{equation}\label{desinglaplaceg}
 Q(q)= \int_0^{\infty} \big(1-e^{-qx}\big) f(x)\,dx\,.
\end{equation}
The function $Q$ is defined for all $q \geq 0$ and due to \eqref{eq2} we have $Q(0)=0$. 
Normalizing the mass $M=1$ also implies that $Q'(0)=1$. We will see later, see  Lemma \ref{L.ublargex}, that the function $Q$ is defined on $(-\delta,\infty)$ for some
$\delta >0$. 

\bigskip
For the following we define
\begin{equation}\label{mdef}
 {\cal M}(f,f)(q) =\frac 1 2 \int_0^{\infty} \int_0^{\infty} W(x,y) f(x)f(y) \big(1-e^{-qx}\big) \big( 1- e^{-qy}\big)\,dx\,dy\,.
\end{equation}
We first need to show via some a-priori estimates that ${\cal M}(f,f)(0)=0$.
\begin{lemma}\label{L.mproperty}
 If $K(x,y) \geq c_0>0$ and if $f$ is a solution to \eqref{eq1c} and \eqref{eq2} then
\[
 \lim_{q \to 0} {\cal M}(f,f)(q)=0\,.
\]
\end{lemma}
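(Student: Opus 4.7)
The plan is to derive an explicit algebraic identity linking ${\cal M}(f,f)(q)$, $Q(q)$ and $Q'(q)$, from which the limit $q\to 0^+$ is immediate. The starting point is the self-similar equation \eqref{eq1}; I would test it against the smooth cutoff $\psi(x)=1-e^{-qx}$, which vanishes at $x=0$ and is bounded by $qx$. On the left-hand side, an integration by parts in the $-xf'(x)$ term, together with $\int_0^\infty xe^{-qx}f(x)\,dx = Q'(q)$, yields
\[
\int_0^\infty \psi(x)\bigl[-xf'(x)-2f(x)\bigr]\,dx = -Q(q) + qQ'(q),
\]
provided the boundary contributions $[xf(x)\psi(x)]_0^\infty$ vanish. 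On the right-hand side, the standard weak formulation of the coagulation operator reads
\[
\int_0^\infty \psi(x)\, Q[f](x)\,dx = -\frac{1}{2}\int_0^\infty \!\int_0^\infty K(x,y) f(x) f(y)\bigl[\psi(x)+\psi(y)-\psi(x+y)\bigr]\,dx\,dy,
\]
and for this specific exponential test function one has the algebraic identity $\psi(x)+\psi(y)-\psi(x+y)=(1-e^{-qx})(1-e^{-qy})$. Splitting $K = W + 2$, the right-hand side becomes $-{\cal M}(f,f)(q) - Q(q)^2$, and equating both sides produces the key identity
\[
{\cal M}(f,f)(q) = Q(q) - qQ'(q) - Q(q)^2.
\]

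The limit is then elementary. The estimate $1-e^{-qx}\le qx$ gives $Q(q)\le q\int_0^\infty xf(x)\,dx = q \to 0$, while dominated convergence (with integrable majorant $xf$) gives $Q'(q) \to Q'(0) = 1$, whence $qQ'(q) \to 0$ and $Q(q)^2 \to 0$. All three terms on the right therefore vanish as $q \to 0^+$, which gives the claim.

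The main obstacle is the rigorous justification of these formal manipulations, namely (i) the vanishing of $xf(x)\psi(x)$ at $x=0$ and $x=\infty$, and (ii) absolute convergence of the double integral against $K(x,y)f(x)f(y)$, needed both for Fubini and for the weak formulation. Both issues concern the behaviour of $f$ near the origin, since decay at infinity is harmless given $\int xf\,dx < \infty$ (and will be upgraded to exponential decay in Lemma \ref{L.ublargex}). This is precisely where the hypothesis $K\ge c_0>0$ enters: inserting the lower bound into \eqref{eq1c} one finds $x^2 f(x) \ge c_0 \int_0^x y f(y)\bigl(\int_{x-y}^\infty f(z)\,dz\bigr)\,dy$, which can be turned into an a priori ceiling on how singular $f$ may be at $x=0$, making $xf(x)\to 0$ and the double-integral bound legitimate.
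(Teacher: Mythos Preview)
Your route is genuinely different from the paper's. The paper estimates $\mathcal{M}(f,f)(q)$ directly: it first quotes the a~priori bound $\sup_{R>0}\tfrac{1}{R}\int_{R/2}^{R}xf\,dx\le C$ from \cite{NV11b} (this is where $K\ge c_0$ enters), deduces $\int_0^1 y^{1-\alpha}f(y)\,dy<\infty$ by a dyadic sum, and then splits the double integral in $\mathcal{M}$ into the regions $\{x,y<\tfrac{1}{2q}\}$, $\{x,y>\tfrac{1}{2q}\}$, and the cross terms, showing each piece is $O(q^{2-2\alpha})$. Your approach instead anticipates the identity of Lemma~\ref{L.Qequation}, $\mathcal{M}(f,f)(q)=Q(q)-qQ'(q)-Q(q)^2$, and reads off the limit. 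The identity is correct and the limit argument is clean; if the identity can be derived first, Lemma~\ref{L.mproperty} does become a one-line corollary.

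The gap is in the derivation of the identity itself. In the paper's order, Lemma~\ref{L.Qequation} is obtained by integrating $-(qQ')'=(Q^2-Q+\mathcal{M})'$, and it is precisely Lemma~\ref{L.mproperty} that supplies the boundary value $\mathcal{M}(0^+)=0$ needed to fix the integration constant. Your alternative --- testing \eqref{eq1} directly against $1-e^{-qx}$ --- would indeed bypass this, but the justification you sketch does not close. The self-similar profile is only assumed to satisfy the integrated equation \eqref{eq1c}, so the use of $f'$ and the boundary terms in the integration by parts are formal; more seriously, the weak form $\int\psi\,Q[f]=-\tfrac12\int\!\!\int K f f\,[\psi(x)+\psi(y)-\psi(x{+}y)]$ requires absolute convergence of the individual pieces $\int\!\!\int K(x,y)f(x)f(y)\psi(x)\,dx\,dy$, which in turn needs $\int_0^\infty f<\infty$ (or $\int\!\!\int Kff<\infty$). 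Those facts are established only in Lemma~\ref{L.apriori}, whose proof uses Lemma~\ref{L.Qequation} and hence Lemma~\ref{L.mproperty}. Your proposed use of $K\ge c_0$ to get ``$xf(x)\to 0$'' addresses only the boundary contribution, not this integrability issue; what the lower bound actually yields (and what the paper uses) is the moment estimate $\int_0^1 y^{1-\alpha}f<\infty$, which by itself is not enough to run the weak formulation. So as written there is a circularity that the paper's direct estimation avoids.
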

\begin{proof}
 We first notice that the proof of Lemma 2.1 in \cite{NV11b} applies without any change to conclude that
\begin{equation}\label{extra1}
 \sup_{R>0} \frac{1}{R} \int_{R/2}^R x f(x)\,dx \leq C\,.
\end{equation}
Then, by a dyadic argument, we conclude with \eqref{extra1} that
\begin{equation}\label{ubsmallx}
  \begin{split}
  \int_0^1y^{1-\alpha} f(y)\,dy & \leq \sum_{n=0}^{\infty} \int_{2^{-(n{+}1)}}^{2^{-n}} y^{1{-}\alpha} f(y)\,dy\\
& \leq \sum_{n=0}^{\infty} 2^{n\alpha} \int_{2^{-(n{+}1)}}^{2^{-n}} y f(y)\,dy\\
& \leq \sum_{n=0}^{\infty} 2^{n(\alpha-1)} \leq C\,.
 \end{split}
\end{equation}
As a consequence, we can estimate
\[
 \begin{split}
  \int_0^{1/2q}\int_0^{1/2q}& |W(x,y)| f(x)f(y)\big(1-e^{-qx}\big)\big(1-e^{-qy}\big) \,dx\,dy\\
 & \leq 
C q^2 \int_0^{1/2q}\int_0^{1/2q} \Big( \Big(\frac{x}{y}\Big)^{\alpha} + \Big(\frac{y}{x}\Big)^{\alpha}\Big) yx f(x)f(y)\,dx\,dy\\
& \leq C q^{2-2\alpha} \to 0 \qquad \mbox{ as } q \to 0\,.
 \end{split}
\]
Furthermore, using \eqref{eq2}, we have
\[
 \begin{split}
  \int_{1/2q}^{\infty} \int_{1/2q}^{\infty}& |W(x,y)| f(x)f(y) \big(1-e^{-qx}\big)\big(1-e^{-qy}\big) \,dx\,dy \\
& \leq 
C \int_{1/2q}^{\infty} \int_{1/2q}^{\infty} x^{\alpha} y^{\alpha} f(x) f(y)\,dx\,dy\\
& \leq C q^{2-2\alpha} \to 0 \qquad \mbox{ as } q \to 0
 \end{split}
\]
and we can similarly conclude that the term $\int_0^{1/2q} \int_{1/2q}^{\infty} \,dy\,dx ....$ converges to zero as $q \to 0$, which proves the claim. 
\end{proof}

\bigskip
To obtain further estimates we  derive a differential equation for $Q$.
\begin{lemma}\label{L.Qequation}
 The function $Q$ satisfies for all $q$ with $Q(q)<\infty$ that 
\begin{equation}\label{qequation}
 -q Q'(q) = Q^2-Q + {\cal M}(f,f)(q)\,.
\end{equation}
\end{lemma}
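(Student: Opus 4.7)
The identity \eqref{qequation} is obtained by taking the desingularized Laplace transform of the self-similar equation \eqref{eq1}. Writing $\psi(x):=1-e^{-qx}$, I multiply \eqref{eq1} by $\psi(x)$ and integrate on $(0,\infty)$:
\[
\int_0^\infty \psi(x)\bigl(-xf'(x)-2f(x)\bigr)\,dx \;=\; \int_0^\infty \psi(x)\,Q[f](x)\,dx,
\]
and then express both sides in terms of $Q$ and ${\cal M}(f,f)$.

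\emph{Left-hand side.} Using the identity $\frac{d}{dx}[x\psi(x)]=\psi(x)+qxe^{-qx}$, integration by parts in the $-xf'$ term gives
\[
-\int_0^\infty \psi(x)\,xf'(x)\,dx \;=\; Q(q)+qQ'(q),
\]
provided the boundary contribution $[x\psi(x)f(x)]_0^\infty$ vanishes. At $x=0$ this follows from $x\psi(x)=O(x^2)$ combined with the local integrability implicit in \eqref{ubsmallx}, while at $x=\infty$ it follows from the tail decay of $f$ (Lemma~\ref{L.ublargex}). Subtracting the $2f$ contribution then reduces the left-hand side to $qQ'(q)-Q(q)$.

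\emph{Right-hand side.} Unfolding the definition of $Q[f]$, applying Fubini with the substitution $(\xi,\eta)=(x-y,y)$ in the gain term, and symmetrising $x\leftrightarrow y$ in the loss term produces the standard weak form
\[
\int_0^\infty \psi(x)\,Q[f](x)\,dx \;=\; \tfrac{1}{2}\int_0^\infty\!\!\int_0^\infty \bigl[\psi(x+y)-\psi(x)-\psi(y)\bigr] K(x,y)f(x)f(y)\,dx\,dy.
\]
The elementary identity
\[
\psi(x+y)-\psi(x)-\psi(y)\;=\;-(1-e^{-qx})(1-e^{-qy})\;=\;-\psi(x)\psi(y)
\]
together with the splitting $K=2+W$ and the factorisation $Q(q)^2=\int\!\int f(x)f(y)\psi(x)\psi(y)\,dx\,dy$ turn the right-hand side into $-Q(q)^2-{\cal M}(f,f)(q)$. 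Equating the two sides and rearranging yields \eqref{qequation}.

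\emph{Main obstacle.} The substantive work is the rigorous justification of these formal manipulations -- differentiation of $Q$ under the integral sign, the boundary-free integration by parts, and the Fubini rearrangements in the double integral -- under the sole hypothesis $Q(q)<\infty$. I would handle this by combining the small-$x$ control \eqref{ubsmallx}, the mass normalisation $\int_0^\infty xf\,dx=1$, the exponential tail bound from Lemma~\ref{L.ublargex}, and the local finiteness of ${\cal M}(f,f)(q)$ already established in the proof of Lemma~\ref{L.mproperty}; together these imply absolute convergence of every integral used above. If one wishes to avoid pointwise derivatives of $f$, the same calculation can be carried out starting from the integral form \eqref{eq1b}, pairing $-\partial_x(x^2f)$ against the test function $\psi(x)/x$ and using $x^2\frac{d}{dx}[\psi(x)/x]=qxe^{-qx}-\psi(x)$ to reproduce the same left-hand side after a single integration by parts.
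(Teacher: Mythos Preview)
Your argument is correct, but it differs from the paper's. The paper multiplies the \emph{integrated} equation \eqref{eq1c} by $e^{-qx}$ and integrates, obtaining
\[
-Q''(q)=\int_0^\infty x^2 f(x)e^{-qx}\,dx=\frac{2}{q}Q'(q)Q(q)+\frac{1}{q}\,{\cal M}(f,f)'(q),
\]
i.e.\ the \emph{derivative} of \eqref{qequation}; it then integrates once and uses Lemma~\ref{L.mproperty} (${\cal M}(f,f)(0)=0$) to fix the constant of integration. Your approach instead tests the differential form of the equation against $\psi(x)=1-e^{-qx}$ and lands on \eqref{qequation} directly, so the vanishing of ${\cal M}$ at $q=0$ never enters. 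The trade-off is that the paper's route only needs Fubini on an absolutely convergent triple integral, whereas yours requires an integration by parts against $f'$ (or, in your alternative, against $(x^2f)'$); since self-similar profiles are only assumed to be in $L^1_{loc}$ and to satisfy \eqref{eq1c} pointwise, this step needs the extra justification you sketch. Both routes are valid; the paper's is slightly more robust with respect to the regularity actually assumed of $f$, while yours is conceptually shorter and shows that Lemma~\ref{L.mproperty} is not logically needed for \eqref{qequation}.
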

\begin{proof}
Multiplying \eqref{eq1c} by $e^{-qx}$ and integrating we find, after changing the order of integration, that 
\[
 \begin{split}
-Q^{''}(q) &=  \int_0^{\infty} x^2 f(x) e^{-qx}\,dx \\
&=\int_0^{\infty} \int_0^{\infty} K(y,z) y f(y) f(z) \int_{y}^{y+z} e^{-qx}\,dx\,dy\,dz\\
&= \int_0^{\infty} \int_0^{\infty} K(y,z) y f(y) f(z) \frac{1}{q} e^{-qy} \Big(1 - e^{-qz}\Big)\,dy\,dz\\
&= \frac{2}{q} Q'(q) Q(q) + \frac{1}{q}  {\cal M}(f,f)'(q)
 \end{split}
\]
and as a consequence we find
\[
 - \big( q Q'\big)' = \big(Q^2\big)' - Q' + \big( {\cal M}(f,f)\big)'\,.
\]
By definition, we have $Q(0)=0$ and Lemma \ref{L.mproperty} implies that ${\cal M}(f,f)(0)=0$. Hence, integrating the previous identity we deduce the claim.
\end{proof}
In the following we denote by $\bar Q$ the desingularized Laplace transform for the case $K=2$, that is
\begin{equation}\label{Qbardef}
 \bar Q(q) = \int_0^{\infty} e^{-x}\big(1-e^{-qx}\big)\,dx = 1-\frac{1}{1+q}=\frac{q}{1+q}\,.
\end{equation}

In the following Lemma we derive some  a-priori estimates for $Q$ and ${\cal M}$ that are essential for our analysis
and follow rather easily from the lower bound on $K$.

\begin{lemma}\label{L.apriori}
If  $K(x,y) \geq c_0>0$ for all $x,y>0$, then the following estimates hold.
\begin{align}
 \lim_{q \to \infty} Q(q)&<\infty \label{qinfty}\qquad \mbox{ and hence } \int_0^{\infty} f(x)\,dx < \infty\,,\\
\sup_{q >0} |q Q'(q)|&\leq C\,,\label{qprimebound}\\
\int_0^{\infty}\int_0^{\infty} K(x,y) f(x) f(y)\,dx\,dy&< \infty\,, \label{kintegralbound}\\
\lim_{q \to \infty} {\cal M}(f,f)(q)& <\infty\label{Mlimit}.
\end{align}
\end{lemma}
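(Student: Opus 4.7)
The plan is to exploit the differential identity \eqref{qequation} after rewriting its right-hand side in terms of the full kernel $K$ rather than the perturbation $W$. From the definitions \eqref{desinglaplaceg}--\eqref{mdef} and the relation $W = K - 2$, a direct computation gives the key algebraic identity
\[
 Q(q)^2 + {\cal M}(f,f)(q) = \frac{1}{2}\int_0^\infty\int_0^\infty K(x,y) f(x) f(y) (1{-}e^{-qx})(1{-}e^{-qy})\,dx\,dy,
\]
so that \eqref{qequation} becomes
\[
 Q(q) - q Q'(q) = \frac{1}{2}\int_0^\infty\int_0^\infty K(x,y) f(x) f(y) (1{-}e^{-qx})(1{-}e^{-qy})\,dx\,dy,
\]
whose right-hand side is manifestly nonnegative and monotone nondecreasing in $q$. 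All four statements are then extracted from this one identity.

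For \eqref{qinfty}, the elementary bound $1-e^{-qx}\leq qx$ together with \eqref{eq2} already gives $Q(q) \leq q < \infty$, legitimizing the identity at every $q$. Since $f \geq 0$, the function $Q$ is nondecreasing, so $qQ'(q) \geq 0$ and the left-hand side of the identity is bounded above by $Q(q)$; the lower bound $K\geq c_0$ makes the right-hand side bounded below by $\frac{c_0}{2}Q(q)^2$. Combining these two inequalities yields $Q(q)\leq 2/c_0$ for all $q\geq 0$, and monotone convergence then gives $\int_0^\infty f(x)\,dx = \lim_{q\to\infty}Q(q) \leq 2/c_0$.

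For \eqref{qprimebound}, solving the identity for $qQ'(q)$ and using positivity of the double integral plus the uniform bound just obtained for $Q$ gives $0 \leq qQ'(q) \leq Q(q) \leq 2/c_0$. For \eqref{kintegralbound}, the integrand on the right is monotone in $q$, so monotone convergence combined with the identity yields
\[
 \frac{1}{2}\int_0^\infty\int_0^\infty K(x,y) f(x) f(y)\,dx\,dy = \lim_{q\to\infty}\bigl(Q(q) - qQ'(q)\bigr) \leq \frac{2}{c_0}.
\]
Finally, \eqref{Mlimit} follows by rewriting
\[
 {\cal M}(f,f)(q) = \frac{1}{2}\int_0^\infty\int_0^\infty K(x,y) f(x) f(y)(1{-}e^{-qx})(1{-}e^{-qy})\,dx\,dy - Q(q)^2,
\]
where the first term converges as $q\to\infty$ by \eqref{kintegralbound} and monotone convergence, and $Q(q)^2$ converges by the monotonicity and boundedness of $Q$.

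The only real content is recognizing the algebraic rewriting that trades the bilinear functional ${\cal M}$ in $W$ for a double integral in $K$; once that identity is in hand, the pointwise lower bound $K\geq c_0$ immediately converts \eqref{qequation} into a quadratic-in-$Q$ inequality, and the remaining estimates are purely monotonicity and monotone convergence. The only potential technical subtlety—ensuring every quantity is finite pointwise in $q$—is absorbed into the trivial mass estimate $Q(q) \leq q$.
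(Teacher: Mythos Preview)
Your proof is correct and follows essentially the same approach as the paper: both rewrite \eqref{qequation} with $K$ in place of $W$ to obtain $Q-qQ'=\tfrac12\iint K f f(1{-}e^{-qx})(1{-}e^{-qy})$, use $K\geq c_0$ to get a quadratic bound on $Q$, and then deduce the remaining statements by monotone convergence. Your execution is slightly more streamlined---where the paper invokes an ODE comparison for \eqref{qinfty} and a direct integral computation for \eqref{qprimebound}, you instead use the trivial observation $qQ'(q)\geq 0$ to read both bounds directly off the identity---but the underlying idea is the same.
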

\begin{proof}
With the assumption on $K$ we can deduce from \eqref{qequation}, written with $K$ instead of $W$, that
\[
 -q Q'(q) = -Q + \int_0^{\infty} \int_0^{\infty} K(x,y) f(x)f(y) \big(1{-}e^{-qx}\big)\big(1{-}e^{-qy}\big)dxdy \geq - Q + c_0 Q^2.
\]
Hence, by comparing with the solution of the corresponding ODE, the function $Q$ is uniformly bounded. Since $Q$ is increasing, statement \eqref{qinfty} follows.

Next, we have
\[
 Q'(q) = \frac{1}{q} \int_0^{\infty} xq e^{-xq} f(x)\,dx \leq \frac{C}{q} \int f(x)\,dx \,,
\]
which together with \eqref{qinfty} establishes \eqref{qprimebound}.

Then it follows from the equation for $Q$ that 
\[
 \int_0^{\infty} \int_0^{\infty} K(x,y) f(x)f(y) \big(1-e^{-qx}\big)\big(1-e^{-qy}\big)\,dx\,dy\leq C
\]
and by monotone convergence we find \eqref{kintegralbound} in the limit $q \to \infty$. Denoting this limit by $J$ we finally get that
\begin{align*}
 {\cal M}(f,f)(q)& = \int_0^{\infty} \int_0^{\infty} W(x,y) f(x)f(y) \big(1-e^{-qx}\big)\big(1-e^{-qy}\big)\,dx\,dy\\
&=\int_0^{\infty} \int_0^{\infty} K(x,y) f(x)f(y) \big(1-e^{-qx}\big)\big(1-e^{-qy}\big)\,dx\,dy - Q(q)^2\\
& \to J - Q(\infty)^2\,
\end{align*}
which proves \eqref{Mlimit}.
\end{proof}

\subsection{Regularity near zero}

In the following Lemma we prove a certain regularity for $f$ as $x \to 0$. As already mentioned in the introduction, this result relies on a uniform lower bound on the kernel.
In fact, for the diagonal kernel, the corresponding result is known not to be true, since solutions behave as $f(x) \sim \frac{C}{x}$ as $x \to 0$ and thus 
\eqref{regularity} and \eqref{negativemoment}
do not hold. 

\begin{lemma}\label{L.regularity}
Given $\eta>0$ there exists  $\rho_0>0$ such that for sufficiently small $\eps$
\begin{equation}\label{regularity}
\int_{\rho}^{2\rho} f(x)\,dx \leq C\rho^{1-\eta} \qquad \mbox{ for all } \rho \in (0,\rho_0]\,.
\end{equation}
As a consequence we obtain
\begin{equation}\label{negativemoment}
 \int_0^1 \frac{f(x)}{x^{\alpha}}\,dx \leq C_{\alpha}\,.
\end{equation}
\end{lemma}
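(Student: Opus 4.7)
The plan is to first prove \eqref{regularity} using the integral equation \eqref{eq1c} together with the a priori dyadic bound \eqref{extra1}, and then deduce \eqref{negativemoment} as a consequence. The latter step is routine: choosing $\eta < 1-\alpha$ (possible since $\alpha < 1$), one splits $\int_0^1 f(x)/x^\alpha\,dx = \sum_{n\geq 0}\int_{2^{-n-1}}^{2^{-n}}f(x)/x^\alpha\,dx$ and bounds each piece by $2^{(n+1)\alpha} \cdot C 2^{-(n+1)(1-\eta)} = C 2^{(n+1)(\alpha+\eta-1)}$, which gives a convergent geometric series.

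For \eqref{regularity} itself, I first note that the same dyadic computation as in \eqref{ubsmallx} (applied to \eqref{extra1}) yields not only $\int_0^x y^{1-\alpha}f(y)\,dy \leq C x^{1-\alpha}$ but also $\int_0^x y f(y)\,dy \leq C x$ and the uniform estimate $\int_{R/2}^R f(x)\,dx \leq C$. Writing $K(y,z) = 2 + W(y,z)$ and $F(a) := \int_a^\infty f(z)\,dz \leq Q_\infty$, the integral equation \eqref{eq1c} splits as
\[
 x^2 f(x) = 2\int_0^x yf(y)F(x-y)\,dy + \int_0^x yf(y)\int_{x-y}^\infty W(y,z)f(z)\,dz\,dy.
\]
The first (constant-kernel) term is bounded by $2Q_\infty \int_0^x yf\,dy \leq Cx$. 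For the perturbation, one uses $|W(y,z)| \leq \eps((y/z)^\alpha + (z/y)^\alpha)$ and splits the $y$-integration at $x/2$. On $y < x/2$ one has $z \geq x/2 \geq y$, so the $(z/y)^\alpha$ contribution is controlled by $\eps y^{-\alpha}\int z^\alpha f\,dz \leq C\eps y^{-\alpha}$, which integrates against $yf(y)$ to give $C\eps x^{1-\alpha}$. On $y \in (x/2,x)$ one further splits at $z = y$; the troublesome piece $\eps y^\alpha \int_{x-y}^y z^{-\alpha}f(z)\,dz$ is controlled by $C\eps y^\alpha (x-y)^{-\alpha}$ via dyadic decomposition of $[x-y,y]$, using $\int_{R/2}^R f \leq C$ and summing the geometric series $\sum_{j\geq 0} 2^{-j\alpha}$.

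To pass from the resulting pointwise estimate on $x^2 f(x)$ to the desired dyadic decay, I would run a bootstrap. Starting from the trivial $\phi(\rho) := \int_0^\rho f(y)\,dy \leq C$ (from $f \in L^1$), an assumption $\phi(\rho) \leq C \rho^\gamma$ for $\gamma \in [0,1)$ and $\rho \leq \rho_0$ improves the moment to $\int_0^x yf(y)\,dy \leq C x^{1+\gamma}$ by integration by parts, and then, using the convolution structure and the decomposition above together with the exact identity $F(x-y) = Q_\infty - \phi(x-y)$, the estimate on $x^2 f(x)$ acquires a higher power of $x$. Iterating a finite number of times drives $\gamma$ up to the prescribed $1-\eta$, provided the multiplicative constants do not blow up; this is where the smallness of $\eps$ enters, as it keeps the perturbative terms subdominant at each iteration step.

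The main obstacle is the region $y \approx x$ in the perturbative part, where the factor $y^\alpha(x-y)^{-\alpha}$ produces borderline-divergent contributions that neither the trivial moment bounds nor the iterated bootstrap improve on their own. The required control is obtained by combining the dyadic bound $\int_{R/2}^R f \leq C$ (which is all that \eqref{extra1} yields for the boundary region) with the $\eps$-smallness to absorb these terms into the main estimate. This forces the loss of $\eta$ in the exponent and the dependence of $\rho_0$ and $\eps$ on $\eta$ stated in the lemma.
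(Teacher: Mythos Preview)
Your bootstrap has a genuine gap. Suppose $\phi(\rho)=\int_0^\rho f\le C\rho^\gamma$; then indeed $\int_0^x yf(y)\,dy\le Cx^{1+\gamma}$. But the constant-kernel piece of \eqref{eq1c} then gives only
\[
x^2 f(x)\ \le\ 2Q_\infty\int_0^x yf(y)\,dy + (\text{perturbation})\ \le\ C x^{1+\gamma} + C\eps(\cdots),
\]
hence $f(x)\le Cx^{\gamma-1}$ and $\int_\rho^{2\rho} f\le C\rho^\gamma$ once more: the exponent does not move. The identity $F(x-y)=Q_\infty-\phi(x-y)$ cannot rescue this, because the correction $-2\int_0^x yf(y)\phi(x-y)\,dy$ helps an upper bound only via a \emph{lower} bound on this convolution, which the inductive hypothesis $\phi\le C\rho^\gamma$ does not supply. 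The prefactor $2Q_\infty$ is close to $2$, not to $0$, so this term is not perturbative and cannot be absorbed into the $\eps$-small remainder. The iteration is circular.

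The paper's proof avoids this by working on the Laplace side and exploiting a cancellation you do not have access to in the direct formulation. Setting $Z(q)=Q(\infty)-Q(q)$ and using the limiting relation $Q(\infty)^2-Q(\infty)+L=0$ (from Lemma~\ref{L.apriori}), the ODE \eqref{qequation} becomes
\[
qZ' = Z^2 + \big(1-2Q(\infty)\big)Z + \big({\cal M}(f,f)-L\big).
\]
Since $Q(\infty)=1+O(\eps)$, the linear coefficient is $-1+O(\eps)$; the lower bound $W\ge-\eps$ controls ${\cal M}(f,f)-L$ by $C\eps Z$. This yields $qZ'\le(\eta-1)Z$ for large $q$, hence $Z(q)=\int_0^\infty e^{-qx}f\,dx\le Cq^{-(1-\eta)}$, which is \eqref{regularity} after taking $q=1/\rho$. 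The point is that the subtraction at $q=\infty$ turns the dangerous order-one coefficient into $-1+O(\eps)$; in your direct approach the analogous coefficient stays near $2$ and the loop never closes. Your derivation of \eqref{negativemoment} from \eqref{regularity} by dyadic summation is correct and matches the paper.
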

\begin{proof}
 We have seen in Lemma \ref{L.apriori} that $L:=\lim_{q \to \infty} {\cal M}(f,f)(q)$ exists. Furthermore, we deduce from 
\eqref{qequation} that 
\begin{equation}\label{qlrelation}
 Q^2(\infty)-Q(\infty)+L=0\,.
\end{equation}
Using \eqref{kernel2}, \eqref{qequation} and \eqref{qlrelation} we can derive the following differential inequality for the positive function $Z(q):=Q(\infty)-Q(q)$:
\begin{align*}
 qZ'&= Q^2-Q+ {\cal M}(f,f)\\
&= Z^2 + \big (1-2Q(\infty)\big) Z -L + {\cal M}(f,f)\\
&=Z^2 + \big (1-2Q(\infty)\big) Z + \tfrac 1 2 \int_0^{\infty} \int_0^{\infty} W(x,y) f(x) f(y) \Big( \big(1-e^{-qx}\big)\big(1{-}e^{-qy}\big) {-}1\Big)dxdy  \\
& \leq Z^2 + \big (1-2Q(\infty)\big) Z -C\eps \Big( Q(q)^2-Q(\infty)^2\Big)\\
&\leq (1-C\eps) Z^2 + \big(1-2(1-C\eps) Q(\infty)\big) Z\,.
\end{align*}
We know that given $\delta>0$ we have $Q(\infty) \in (1-\delta,1+\delta)$ if $\eps$ is sufficiently small. Hence $Z(q) \leq \delta $ for $q \geq \hat q$ where $\hat q$ is 
sufficiently large. As a consequence we obtain 
\[
 qZ'(q) \leq (\eta-1) Z \qquad \mbox{ with } \eta = (1-C\eps)\delta + 2C(\eps+\delta)) \,,
\]
which implies
\begin{equation}\label{Zbound}
 Z(q) \leq Z(\hat q) \Big( \frac{\hat q}{q}\Big)^{1{-}\eta} 
\end{equation}
and thus
\begin{equation}\label{Zbound1}
 Z(q) = \int_0^{\infty} f(x) e^{-qx}\,dx \leq \frac{C}{q^{1{-}\eta}}\,\qquad \mbox{ for } q \geq \hat q\,.
\end{equation}
Choosing $q=\frac{1}{\rho}$, the estimate \eqref{regularity} follows.  
To obtain \eqref{negativemoment} we use  a dyadic argument. More precisely, we estimate for $\eta < 1-\alpha$ that
\[
 \int_0^1 \frac{f(x)}{x^{\alpha}}\,dx  = \sum_{n=0}^{\infty} \int_{2^{-(n+1)}}^{2^{-n}} \frac{f(x)}{x^{\alpha}}\,dx \\
\leq C \sum_{n=0}^{\infty} 2^{-(1{-}\eta {-}\alpha)n }\leq C\,.
\]

\end{proof}

\subsection{Exponential decay} 

A key result for our analysis is the following decay estimate. If $f$ is a  solution of \eqref{eq1c} and \eqref{eq2} then it  decays exponentially fast.
This fact as well as stronger results can be proved for a much larger class of kernels than considered in this paper (see  \cite{NV13a}). For the convenience
of the reader we present the proof of Lemma \ref{L.ublargex} in the appendix.

\begin{lemma}
 \label{L.ublargex}
There exist constants $C,a>0$ such that any solution of \eqref{eq1c}, \eqref{eq2} satisfies
\[
 f(x) \leq C e^{-ax} \qquad \mbox{ for all } x\geq 1\,.
\]
\end{lemma}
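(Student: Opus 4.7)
The plan is a two-stage argument: first establish finiteness of a positive exponential moment $\int_0^\infty e^{\mu_0 x}f(x)\,dx<\infty$ for some $\mu_0>0$; second, use this to extract the pointwise bound from the integral equation \eqref{eq1c}.

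For the first stage I work with the extension
\[
\tilde Q(\mu) := -Q(-\mu) = \int_0^\infty (e^{\mu x}-1) f(x)\,dx
\]
of the Laplace transform to negative arguments. Setting $q=-\mu$ in \eqref{qequation} formally yields
\[
\mu\,\tilde Q'(\mu) \;=\; \tilde Q(\mu) + \tilde Q(\mu)^2 + \tfrac{1}{2}\Omega(\mu), \qquad \tilde Q(0)=0,
\]
where $\tilde Q'(\mu) = \int_0^\infty x e^{\mu x} f(x)\,dx$ and $\Omega(\mu) = \int_0^\infty\int_0^\infty W(x,y) f(x) f(y) (e^{\mu x}-1)(e^{\mu y}-1)\,dx\,dy$. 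For the unperturbed kernel this ODE has the explicit solution $\mu/(1-\mu)$, finite on $[0,1)$. I would make the argument rigorous by truncating $f$ and working with a sequence of approximate identities, then using \eqref{kernel3} and the regularity \eqref{negativemoment} to obtain a bound of the form
\[
|\Omega(\mu)|\leq C\eps\,\bigl(\tilde Q(\mu)+\tilde Q'(\mu)\bigr)\bigl(1+\tilde Q(\mu)\bigr).
\]
A continuation/comparison argument with the unperturbed ODE, exploiting the smallness of $\eps$, then shows that $\tilde Q$ stays finite on some interval $[0,\mu_0]$ with $\mu_0$ close to $1$. The main obstacle lies here: one must avoid circularity when extending \eqref{qequation} to $q<0$, since ${\cal M}(f,f)(q)$ is a priori ill-defined for $q<0$, and this requires careful use of the a priori estimates \eqref{Zbound1} and \eqref{negativemoment} in passing to the truncation limit.

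The second stage is comparatively routine. The exponential moment yields tail bounds such as $\int_a^\infty(1+z^\alpha) f(z)\,dz\leq C(1+a^\alpha) e^{-\mu_0 a}$ and $\int_a^\infty y f(y)\,dy\leq C e^{-\mu_0 a}$ for $a\geq 1$. Splitting the $y$-integration in \eqref{eq1c} at $y=x/2$, the contribution from $y\leq x/2$ is controlled using the kernel bound \eqref{kernel3} applied to the inner integral over $z>x-y>x/2$ together with \eqref{negativemoment} in the outer integral, while the contribution from $y>x/2$ is controlled using the uniform bound $\int_0^\infty K(y,z) f(z)\,dz \leq C(1+y^\alpha)$ together with the exponential tail estimate on $\int_{x/2}^\infty y f(y)\,dy$. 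Each piece is bounded by $C(1+x^{1+\alpha}) e^{-\mu_0 x/2}$, hence $x^2 f(x)\leq C(1+x^{1+\alpha}) e^{-\mu_0 x/2}$ and $f(x)\leq C e^{-ax}$ for any $a<\mu_0/2$ and all $x\geq 1$, as claimed.
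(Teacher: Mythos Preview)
Your second stage is fine and close in spirit to what the paper does at the end of its appendix proof: once an exponential moment is available, splitting the $y$-integral in \eqref{eq1c} at $x/2$ and using \eqref{negativemoment} together with the kernel bound \eqref{kernel3} yields the pointwise estimate. The difficulty is entirely in your first stage, and the gap you flag there is real and not resolved by the fix you propose.

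The continuation argument for $\tilde Q(\mu)=\int_0^\infty(e^{\mu x}-1)f\,dx$ needs, to get started at all, that $\tilde Q(\mu)<\infty$ on some interval $(0,\delta)$; only then does the derivation of \eqref{qequation} go through for $q=-\mu<0$ and only then can one run a comparison with the unperturbed ODE. But $\tilde Q(\delta)<\infty$ for some $\delta>0$ is exactly the exponential-moment statement you are trying to prove, and nothing available a~priori (neither \eqref{Zbound1} nor \eqref{negativemoment}, which both concern the behaviour near $x=0$) gives it. Your suggestion of ``truncating $f$'' does not help: if you replace $f$ by $f\chi_{[0,N]}$ the integral equation \eqref{eq1c} is destroyed (the inner $z$-integral runs to $\infty$), so no closed ODE or differential inequality for the truncated transform is available. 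One can try instead to integrate \eqref{eq1c} against $e^{\mu x}$ only over $x\in[0,R]$, but the change of variables then produces terms like $e^{\mu\min(R,y+z)}$ that do not organise into $\tilde Q_R$ and its derivative; getting uniform-in-$R$ control this way is not a routine approximation step.

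The paper's proof takes a genuinely different route that is engineered to avoid this circularity: it works with the polynomial moments $M(\gamma)=\int_0^\infty x^\gamma f\,dx$ and proves $M(\gamma)\le \gamma^\gamma e^{A\gamma}$ by induction on $\gamma$, starting from $M(1)=1$. The moment identity $(\gamma{-}1)M(\gamma)=\tfrac12\int\!\!\int K f f\,[(x{+}y)^\gamma-x^\gamma-y^\gamma]$ expresses $M(\gamma)$ in terms of lower-order moments (after a careful splitting of the region $y\le x$ into dyadic shells $y/x\in[\zeta_{n+1},\zeta_n]$ with suitably chosen exponents $\theta_n$), so each step of the induction is justified by the previous ones and no exponential moment is assumed at the outset. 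Optimising over $\gamma$ then converts the Stirling-type bound on $M(\gamma)$ into the integrated exponential bound $\int_R^{2R}f\le e^{-aR}$, from which the pointwise estimate follows as in your second stage. The trade-off is that the moment induction is technically heavier (the choice of the sequences $\zeta_n,\theta_n$ is delicate), whereas your ODE picture would be cleaner \emph{if} the initialisation could be supplied; as written, it cannot.
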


\begin{rem}
 Due to the invariance of \eqref{eq1c} under rescaling, we can obtain that $f(x) \leq Ce^{-x}$, but have to give up \eqref{eq2} instead.
\end{rem}

As a consequence of Lemma \ref{L.ublargex} one also  obtains the following result.
\begin{lemma}\label{C.ublargex}
 Let $f(x)$ be a solution to \eqref{eq1c} and \eqref{eq2} such that $\int_0^{\infty} f(x) e^{ax}\,dx < \infty$ for $a>0$. Then there exists $b>0$ such that
$f(x)e^{ax} \leq C e^{-bx}$ for all $x \geq 1$.
\end{lemma}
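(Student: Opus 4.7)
The plan is to show that the function $g(x):=f(x)e^{ax}$ decays exponentially by interpreting it as a subsolution of the same type of integral equation satisfied by $f$. Multiplying \eqref{eq1c} by $e^{ax}$ and using that on the domain of integration we have $y+z \geq x$, so $e^{ax}\leq e^{a(y+z)}$, gives the inequality
\begin{equation*}
x^2 g(x) \;\leq\; \int_0^x dy \int_{x-y}^\infty dz\, K(y,z)\, y\, g(y)\, g(z).
\end{equation*}
The hypothesis states precisely that $g \in L^1(\R_+)$.

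To invoke the scheme of Lemma~\ref{L.ublargex}, which relies on an upper-bound analysis of the right-hand side of \eqref{eq1c}, I also need $\int x g(x)\,dx$ finite. Since this may fail at the endpoint $a$, I would reduce the exponent slightly: for small $\delta>0$, set $a':=a-\delta$ and $g_\delta(x):=f(x)e^{a'x}$. The elementary bound $xe^{-\delta x}\leq (e\delta)^{-1}$ combined with the hypothesis yields
\begin{equation*}
\int_0^\infty g_\delta(x)\,dx \,+\, \int_0^\infty x g_\delta(x)\,dx \;\leq\; \frac{C}{\delta}\int_0^\infty f(x)e^{ax}\,dx,
\end{equation*}
while $g_\delta$ still satisfies the subsolution inequality above. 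After rescaling $g_\delta$ to unit first moment (which preserves the inequality by the homogeneity of $K$), the argument of Lemma~\ref{L.ublargex} presented in the Appendix should apply and yield $g_\delta(x)\leq C e^{-b_0 x}$ for $x\geq 1$, where $b_0>0$ depends only on the kernel assumptions \eqref{kernel1}--\eqref{kernel0}. Choosing $\delta<b_0$ then gives $f(x)e^{ax}=e^{\delta x}g_\delta(x)\leq C e^{-(b_0-\delta)x}$, which is the claim with $b:=b_0-\delta$.

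The principal technical point is verifying that the Appendix proof of Lemma~\ref{L.ublargex} is robust in two directions: (i) it should apply to subsolutions of \eqref{eq1c}, not just exact solutions---this is expected since the argument is essentially an upper-bound analysis---and (ii) the rate $b_0$ it produces should be independent of the unit-mass normalization. The second point is the more delicate, since the rescaling factor used to normalize $g_\delta$ grows like $\delta^{-1}$ as $\delta\to 0$, and one must ensure that $b_0$ does not degrade with the normalization. If $b_0$ is genuinely universal, the argument closes; otherwise an alternative is to work directly with the ODE for the desingularized Laplace transform from Lemma~\ref{L.Qequation} and extend its solution slightly to the left of $q=-a'$, using the finiteness of $M(f,f)(-a')$ for $a'<a$ (which follows from the regularity near zero in Lemma~\ref{L.regularity} together with the finite first moment of $g_\delta$).
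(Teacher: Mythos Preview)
Your approach coincides with the paper's: derive the subsolution inequality for $g(x)=f(x)e^{ax}$ and invoke the proof of Lemma~\ref{L.ublargex}. The paper's argument is in fact just these two observations, stated in three lines, with no further discussion.

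You are more careful than the paper in two respects. First, you note that the Appendix argument needs a finite first moment of $g$ to start the moment induction, whereas the hypothesis only gives $\int g<\infty$; your $\delta$-reduction handles this. Second, you correctly flag the potential circularity that the rate $b_0$ produced by the Appendix proof may degrade like $\delta$ after normalisation, so that $b_0-\delta$ could fail to be positive. The paper does not address either point. In the paper's actual applications of this lemma (inside the proofs of Lemmas~\ref{L.qclose} and~\ref{L.singularity}) the first-moment issue is in fact moot: there the hypothesis $\int f e^{ax}\,dx<\infty$ comes from boundedness of $Q$ on $(-a,0)$, and the differential equation \eqref{qequation} together with the estimate \eqref{Mestimate} then bounds $Q'$ on that interval, which is precisely $\int x\,f(x)e^{|q|x}\,dx$. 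With $\int x g\,dx<\infty$ in hand, all moments of $g$ are finite a priori and the Appendix proof runs directly on $g$ without any $\delta$-trick, so your scaling concern never arises. Your proposed alternative via the ODE for $Q$ is thus exactly the right way to close the argument cleanly.
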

\begin{proof}
The statement follows from the observation that the function $g(x)=f(x)e^{-ax}$ satisfies the inequality
\[
 x^2 g(x) = \int_0^x \,dy \int_{x-y}^{\infty}  \,dz K(y,z) e^{a(x-(y+z))} y g(y)g(z) \leq \int_0^{x} \,dy \int_{x-y}^{\infty} \,dz K(y,z) y g(y)g(z)\,,
\]
which is sufficient to apply the proof of Lemma \ref{L.ublargex} to $g(x)$. 
\end{proof}

\subsection{The solution is close to the one for the constant kernel}

Our next Lemma shows  that $Q$ is close  to $\bar Q$ for small $\eps$ as long as we stay away from the singularity of $\bar Q$, that is $q=-1$.

\begin{lemma}\label{L.qclose}
 Given $\delta>0$ and $\nu>0$, we have for sufficiently small  $\eps>0$  that
\[
 \sup_{q > -1+\nu} |(Q-\bar Q)(q)| \leq \delta\,.
\]
\end{lemma}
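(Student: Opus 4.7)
The plan is to compare the ODEs from Lemma \ref{L.Qequation} satisfied by $Q$ (for the kernel $K$) and by $\bar Q$ (for $K\equiv 2$, in which case $W\equiv 0$ and $\mathcal{M}\equiv 0$). Subtracting, the difference $D(q):=Q(q)-\bar Q(q)$ solves
\[
-qD'(q)=\bigl(Q(q)+\bar Q(q)-1\bigr)D(q)+\mathcal{M}(f,f)(q),\qquad D(0)=0,
\]
so the task reduces to showing that the forcing $\mathcal{M}(f,f)$ is of order $\eps$ uniformly on $(-1+\nu,\infty)$ and then propagating this smallness to $D$ through the ODE.

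To bound the forcing, assumption \eqref{kernel3} together with the $x\leftrightarrow y$ symmetry yields
\[
|\mathcal{M}(f,f)(q)|\leq \eps\Bigl(\int_0^{\infty} x^{\alpha}|1-e^{-qx}|f(x)\,dx\Bigr)\Bigl(\int_0^{\infty} y^{-\alpha}|1-e^{-qy}|f(y)\,dy\Bigr).
\]
The small-$y$ singularity in the second factor is absorbed by the moment estimate \eqref{negativemoment}, and the tails are controlled by the exponential decay in Lemma \ref{L.ublargex}. The delicate region is $q\in(-1+\nu,0)$, where $|1-e^{-qx}|$ grows like $e^{|q|x}$ and one needs the decay rate of $f$ to exceed $1-\nu$. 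This is obtained by bootstrapping Lemma \ref{C.ublargex}: starting from the bare rate $a>0$ of Lemma \ref{L.ublargex} and iteratively applying Lemma \ref{C.ublargex}, the supremum of admissible rates coincides with the position of the first negative singularity of $Q$, and the bound $Q(\infty)\in(1-\delta,1+\delta)$ derived in the proof of Lemma \ref{L.regularity} forces this singularity to lie arbitrarily close to $q=-1$ when $\eps$ is small.

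Once $|\mathcal{M}(f,f)(q)|\leq C(\nu)\eps$ on $(-1+\nu,\infty)$ is established, one uses the initial conditions $D(0)=D'(0)=0$ (recall $Q'(0)=\bar Q'(0)=1$ by the normalization $M=1$) together with the refined asymptotics $\mathcal{M}(f,f)(q)=O(\eps\, q^{2-2\alpha})$ as $q\to 0$, visible in the proof of Lemma \ref{L.mproperty}, to integrate the ODE near the origin and obtain $D(q)=O(\eps\, q^{2-2\alpha})$ there. Away from $q=0$, the coefficient $(Q+\bar Q-1)/q$ is locally bounded on $(-1+\nu,\infty)\setminus\{0\}$, so a Gronwall-type estimate extends the bound $|D|\leq C\eps$ to any compact subinterval, and uniformity up to $+\infty$ follows from $Q(\infty),\bar Q(\infty)\in(1-\delta,1+\delta)$. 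The main obstacle is the bootstrap for the decay rate of $f$: it couples the smallness of $D$ to the position of the first negative singularity of $Q$, and is where the smallness of $\eps$ must be used quantitatively in order to push that singularity past $-1+\nu$.
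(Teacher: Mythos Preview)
Your overall strategy---subtract the ODEs, bound the forcing $\mathcal{M}$, then Gronwall---is the same as the paper's, and your treatment of a neighborhood of $q=0$ and of $q\to\infty$ is essentially what the paper does. The gap is in the interval $q\in(-1+\nu,-\eta]$.

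You want a uniform bound $|\mathcal{M}(f,f)(q)|\leq C(\nu)\eps$ there, and for this you need $f(x)\leq Ce^{-(1-\nu)x}$, equivalently that the first negative singularity $q^*$ of $Q$ already satisfies $q^*\leq -1+\nu$. Your justification is that iterating Lemma~\ref{C.ublargex} pushes the decay rate up to $-q^*$, and that ``$Q(\infty)\in(1-\delta,1+\delta)$ forces this singularity to lie arbitrarily close to $q=-1$.'' The first claim is tautological; the second is not established. Knowing $Q(\infty)$ is close to $1$ constrains the limit $L=\mathcal{M}(f,f)(\infty)$ via $Q(\infty)^2-Q(\infty)+L=0$, but says nothing directly about $\mathcal{M}$ for negative $q$, and hence nothing about where $Q$ blows up on the left. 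In the paper the implication $q^*\approx -1$ is the content of Lemma~\ref{L.singularity}, which is proved \emph{after} Lemma~\ref{L.qclose} and explicitly uses it. So your argument is circular.

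The paper breaks the circularity by \emph{not} seeking a uniform bound on $\mathcal{M}$. Instead it uses the self-referential estimate
\[
|\mathcal{M}(f,f)(q)|\leq C\eps\bigl(1+|Q'(q)|+|Q(q)|^{\frac{2-\alpha}{1-\alpha}}\bigr),
\]
valid wherever $Q$ is defined (this uses only \eqref{negativemoment}), and then runs a continuation argument: on any interval $[-\rho,-\eta]$ where $|Q|\leq 2|\bar Q|\leq 2C_\nu$, the above bound turns the ODE into $-qQ'=Q^2-Q+\sigma$ with $|\sigma|\leq C\eps$, Gronwall gives $|G|\leq C(\delta_1+\eps)$, hence $|Q|\leq 2|\bar Q|$ persists, and Lemma~\ref{C.ublargex} (``$Q$ extends as long as $Q$ stays bounded'') lets one push $\rho$ all the way to $1-\nu$. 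The point is that the smallness of $|Q-\bar Q|$ and the extension of the domain of $Q$ are proved simultaneously, not sequentially.
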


\begin{proof}
We denote $G(q):=Q(q)-\bar Q(q)$ such that $G$ satisfies the equation
\begin{equation}\label{Gequation}
 -qG'(q)= \big(2\bar Q-1\big) G + G^2 +{\cal M}(f,f)(q)
\end{equation}
and $G(0)=0$ as well as due to  our normalization $G'(0)=0$.
Integrating \eqref{Gequation} we find
\begin{equation}\label{Grepresentation}
 \frac{G(q)}{q} = \frac{G(q_0)}{q_0} - \frac{1}{(1+q)^2} \int_{q_0}^q (1+r)^2 \frac{G^2(r)}{r^2}\,dr + \frac{1}{(1+q)^2} \int_{q_0}^q (1+r)^2 \frac{{\cal M}(f,f)(r)}{r^2}\,dr\,.
\end{equation}
We first consider $q_0=0$ and recall that by our assumptions $\lim_{q \to 0} \frac{G(q)}{q}=0$. For  $\rho\geq 0$ define  
\[
 \|G\|_{\rho}:=\sup_{|q|\leq \rho} \Big| \frac{G(q)}{q}\Big|\,,
\]
such that $\|G\|_{0} =0$. From Lemma \ref{L.ublargex} we know that there exists $\eta>0$ such that $Q$ and hence $G$ are defined for all  $q\in [-\eta,\infty)$.
By linearizing $1-e^{-qx}$, which is possible due to Lemma \ref{C.ublargex},  we have the estimate
\[
|{\cal M}(f,f)(q)|\leq C_{\eta}\eps q^2 \int_0^{\infty} \int_0^{\infty} \Big( \Big( \frac{x}{y}\Big)^{\alpha} + \Big ( \frac{y}{x}\Big)^{\alpha}\Big) xy f(x) f(y)\,dx\,dx 
\leq C_{\eta} \eps q^2
\]
for $-\eta <q<\infty $. Now let $\rho \in (0,\eta]$ be such that $\|G\|_{\rho}  \leq \frac{1}{2}$\,. Then, we obtain from 
\eqref{Grepresentation}
\[
 \frac{G(q)}{q} \leq \frac{1}{2} \int_0^q \frac{G(r)}{r}\,dr + C_{\eta} \eps \rho
\]
and Gronwall's inequality implies
\[
 \frac{G(q)}{q} \leq C_{\eta} \eps \rho\,,
\]
which implies that we can choose $\rho = \eta$ and have the desired estimate in $[-\eta,\eta]$. 

We are now going to derive the  estimate in $[-1+\nu,-\eta]$. To that aim observe that ${\cal M}$ can be estimated,
recalling \eqref{negativemoment}, by
\begin{equation}\label{Mestimate}
\begin{split}
 {\cal M}(f,f)(q)&\leq C \eps \int_0^{\infty} x^{\alpha} f(x) \big( 1-e^{-qx}\big)\,dx \int_0^{\infty} y^{-\alpha} f(y) \big(1-e^{-qy}\big)\,dy\\
& \leq C \eps \big( 1+|Q'|\big)^{\alpha} |Q|^{1-\alpha} \big(1+|Q|\big) \\
& \leq C \eps \big( 1 + |Q'| + |Q|^{\frac{2-\alpha}{1-\alpha}}\big)\,.
\end{split}
\end{equation}
We know that $|\bar Q(q)|\leq C_{\nu}$ for $q \in [-1+\nu,-\eta]$. We consider now an interval $[-\rho,-\eta]$ such that $|Q(q)| \leq 2|\bar Q(q)|\leq 2C_{\nu}$. 

We know from Lemma \ref{C.ublargex} that  $Q(q)$ is defined on a larger interval, if $Q(q)$ remains bounded.
Then \eqref{qequation} and \eqref{Mestimate} imply that in $[-\rho,-\eta]$ the function $Q$ satisfies an equation of the form
\[
 -q \Big(1+\frac{a(q)}{q}\Big) Q' = Q^2 - Q + b(q)
\]
with 
\[
 |a(q)|\leq C \eps \qquad \mbox{ and } \qquad |b(q)|\leq C \eps\,
\]
and by linearization
\begin{equation}\label{eqdiff}
 -qQ'= Q^2-Q + \sigma \qquad \mbox{ with } |\sigma(q)|\leq C \eps. 
\end{equation}
Then $G$ solves
\[
 -q G'= \big( 2 \bar Q -1 \big) G + G^2 + \sigma\,, \qquad |G(-\eta)| \leq \delta_1
\]
where $\delta_1$ can be made arbitrarily small if $\eps$ is small. 
We can then use the representation formula \eqref{Grepresentation} for $G$ and Gronwall's inequality to conclude that
\[
 |G(q)| \leq C(\delta_1 + \eps) \qquad \mbox{ for all } q \in [-\rho,-\eta]
\]
and this in turn implies that we can take $\rho=-1+\nu$ and we have the desired estimate in $[-1+\nu,\-\eta]$. 

The corresponding estimate in $[\eta,\infty)$ follows similarly, using that due to \eqref{qinfty} we have a uniform bound on $Q$ and thus we also have \eqref{eqdiff}.
\end{proof}

Our next Lemma shows that  $Q$ blows up at
a point $q^*$ that is close to $-1$ and it blows up with the same rate as $\bar Q$.

\begin{lemma}\label{L.singularity}
Given $\delta>0$ there exists $\eps>0$ such that there exists $q^*$ with
$|q^*+1|\leq \delta$ and $\lim_{q \to q^*} |Q(q)|=\infty$.

Furthermore there exists $r>0$ such that
\begin{equation}\label{singularity}
 \big| (q-q^*)Q(q)+1\big| \leq \delta \qquad \mbox{ for all } q \in (q^*,q^*+r)\,.
\end{equation}
 \end{lemma}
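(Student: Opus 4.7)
I would split the argument into locating the blow-up point $q^*$ and then determining its precise blow-up rate via the ODE for $R(q):=1/Q(q)$. To locate $q^*$, define it as the left endpoint of the maximal open interval on which $Q$ is finite. Corollary \ref{C.ublargex} guarantees that $Q$ extends past any point at which it stays bounded, so the monotonicity of $Q$ forces $Q(q)\to-\infty$, hence $|Q(q)|\to\infty$, as $q\to q^{*+}$. Applying Lemma \ref{L.qclose} with $\nu=\delta/2$ and any small closeness shows $Q$ is bounded on $(-1+\delta/2,\infty)$, so $q^*\leq -1+\delta/2$. Conversely, if $q^*<-1$ then $L:=Q(-1)$ is finite and by monotonicity $L\leq Q(-1+\nu)\leq\bar Q(-1+\nu)+1=-1/\nu+O(1)$ for every $\nu>0$; letting $\nu\to 0^+$ gives $L=-\infty$, a contradiction. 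Thus $q^*\in[-1,-1+\delta/2]$ and $|q^*+1|\leq\delta$.

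For the asymptotic, setting $R=1/Q$ in \eqref{qequation} yields
\[
 qR'(q)=1-R(q)+\mathcal{M}(f,f)(q)R(q)^2,\qquad R(q^*)=0,
\]
and the product rule gives $(qR(q))'=1+\mathcal{M}(f,f)R^2$. Provided the correction $\sigma(q):=\mathcal{M}(f,f)(q)R(q)^2$ satisfies $|\sigma|\leq C\eps$ on some right-neighborhood $(q^*,q^*+r)$, integration from $q^*$ to $q$ using $R(q^*)=0$ delivers $qR(q)=(q-q^*)(1+O(\eps))$, and inversion yields
\[
 (q-q^*)Q(q)=\frac{q-q^*}{R(q)}=q\bigl(1+O(\eps)\bigr)=-1+O(\delta+r+\eps),
\]
since $q\in(q^*,q^*+r)$ with $q^*+1\in[0,\delta/2]$. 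Choosing $r$ and $\eps$ sufficiently small then delivers \eqref{singularity}.

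The main obstacle is the sharp bound $|\mathcal{M}(f,f)(q)|\leq C\eps\,Q(q)^2$ (equivalently $|\sigma|\leq C\eps$) near $q^*$. Writing $\psi(x):=f(x)(e^{|q|x}-1)\geq 0$ for $q<0$, one has $|Q(q)|=\int_0^\infty\psi(x)\,dx$, and the hypothesis on $W$ yields
\[
 |\mathcal{M}(f,f)(q)|\leq\eps\int_0^\infty x^\alpha\psi(x)\,dx\cdot\int_0^\infty x^{-\alpha}\psi(x)\,dx,
\]
so the task reduces to $\int x^{\pm\alpha}\psi\leq C|Q|^{1\pm\alpha}$. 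The crude estimate \eqref{Mestimate} only yields the weaker $|\mathcal{M}|\leq C\eps|Q|^{(2-\alpha)/(1-\alpha)}$, which makes $|\sigma|$ blow up like $|Q|^{\alpha/(1-\alpha)}$ and become non-integrable as $q\to q^{*+}$ once $\alpha\geq 1/2$. The required sharpening relies on the exponential tail of $f$ at rate arbitrarily close to $|q^*|\approx 1$, extracted by iterating Corollary \ref{C.ublargex} starting from Lemma \ref{L.ublargex}: this gives $\psi(x)\leq C_a e^{-(a-|q|)x}$ with $a-|q|$ comparable to $1/|Q|$, and a Gamma-function computation then produces $\int x^{\pm\alpha}\psi\leq C|Q|^{1\pm\alpha}$, closing the argument.
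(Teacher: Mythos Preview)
Your localization of $q^*$ from above via Lemma~\ref{L.qclose} is fine, but the argument for $q^*\geq -1$ does not work. Lemma~\ref{L.qclose} is of the form ``given $\delta'>0$ and $\nu>0$, for sufficiently small $\eps$\dots'', so for a \emph{fixed} $\eps$ you only obtain closeness on $(-1+\nu_0(\eps),\infty)$ for some $\nu_0>0$; you cannot send $\nu\to 0$. In fact the paper never claims $q^*\geq -1$: its lower bound is $q^*\geq q_0-\tfrac{7}{4|Q(q_0)|}$ with $q_0=-1+\nu$, which permits $q^*$ to lie slightly below $-1$. The lower bound on $q^*$ has to come from analyzing the ODE in the large-$|Q|$ regime, not from Lemma~\ref{L.qclose}.

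The more serious gap is the estimate $|\mathcal{M}(f,f)(q)|\leq C\eps\,Q(q)^2$ near $q^*$. Your proposed route---iterate Corollary~\ref{C.ublargex} to push the exponential rate $a$ toward $|q^*|$, then bound $\psi(x)\leq C_a e^{-(a-|q|)x}$ and compute a Gamma integral---does not close, because neither the gain $b$ in Corollary~\ref{C.ublargex} nor the prefactor $C_a$ is controlled as $a\uparrow |q^*|$. Obtaining $\int x^{\pm\alpha}\psi\leq C|Q|^{1\pm\alpha}$ with a uniform constant would essentially require a pointwise bound $f(x)\leq Ce^{-|q^*|x}$, which is equivalent to the simple-pole behavior you are trying to establish; the argument is circular.

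The paper avoids this by never appealing to a sharp pointwise tail. It keeps the intermediate form $\mathcal{M}\leq C\eps\bigl(1+|Q'|+|Q|\,|V|^{1/(1-\alpha)}\bigr)$ with $V(q)=\int x^{-\alpha}f(x)(1-e^{-qx})\,dx$, and exploits the H\"older bound $|V(q)-V(\hat q)|\leq C(1+|Q(\hat q)|)\,|q-\hat q|^\alpha$. Stepping toward $q^*$ along $q_{n+1}=q_n-\tfrac{1}{4|Q(q_n)|}$ and arguing inductively yields $|V(q_{n+1})|^{1/(1-\alpha)}\leq C|Q(q_n)|$, hence $\mathcal{M}\leq C\eps(1+|Q'|+|Q|^2)$ on each step; absorbing $|Q'|$ via the ODE then gives $-Q'=(1+a(q))Q^2$ with $|a|$ small, which is integrated explicitly to locate $q^*$ and to read off $(q-q^*)Q(q)=-1+O(\eps+\eta+\nu)$. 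Your $R=1/Q$ reformulation would package this last integration more cleanly once $|\mathcal{M}|\leq C\eps|Q|^2$ is in hand, but the substantive missing ingredient is the H\"older-plus-stepping mechanism that replaces the unavailable sharp tail bound.
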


\begin{proof}
From the previous Lemma we know that $q^* \leq -1+\nu$ where $\nu$ can be made arbitrarily small with $\eps$. 
To obtain a lower bound on $q^*$  we return to \eqref{Mestimate}
and derive
\begin{align}
 {\cal M}(f,f)(q) &\leq C \eps \big(1+|Q'|\big)^{\alpha} |Q|^{1-\alpha} |V|\nonumber\\
& \leq C \eps \big( 1 + |Q'| + |Q| |V|^{\frac{1}{1{-}\alpha}}\big) \label{Mestimate1}
\end{align}
with
\[
 V(q)=\int_0^{\infty} x^{-\alpha} f(x) \big(1-e^{-qx}\big)\,.
\]
We find that $V$ satisfies
\begin{equation}\label{Vhoelder}
 |V(q)-V(\hat q)| \leq C \big(1+|Q(\hat q)|\big) |q-\hat q|^{\alpha}
\end{equation}
Indeed, this follows from 
\begin{align}
\big| V(q)-V(\hat q)\big| &= \Big|\int_0^{\infty} x^{-\alpha} \big(e^{-qx} - e^{-\hat qx}\big) f(x)\,dx \Big|\nonumber \\
& \leq \int_0^{\infty}  e^{-\hat q x} |1-e^{-x(q-\hat q)}| \frac{1}{(x|q-\hat q|)^{\alpha}} |q-\hat q|^{\alpha} f(x)\,dx\nonumber \\
& \leq C |q-\hat q|^{\alpha}\int_0^{\infty} e^{-\hat q x} f(x)\,dx \label{Mestimate2}\\
& \leq C |q-\hat q |^{\alpha} \big( 1 + |Q(\hat q)|\big)\,.\nonumber
\end{align}

Given $\eta>0$ we now choose $\nu$ in Lemma \ref{L.qclose} such that with $q_0=-1+\nu$
\[
 |Q(q_0)| \leq \eta |Q(q_0)|^2 \qquad \mbox{ and } \qquad |\bar Q(q_0)| \leq \eta |\bar Q(q_0)|^2\,.
\]
Then we define a decreasing sequence $q_n$ in the following way:
\begin{equation}\label{qndef}
 q_{n+1}=q_n - \frac{1}{4|Q(q_n)|}\,.
\end{equation}
We are going to show by induction that 
\begin{align}
 |V(q_{n{+}1})|^{\frac{1}{1-\alpha}} & \leq C |Q(q_n)| \,,\label{vestimate}\\
\frac{1}{2}|Q(q_n)| & \leq |Q(q_{n+1})| \leq 2 |Q(q_n)|\,, \label{Q1}\\
 |Q(q_{n{+}1})| &\geq \frac 7 6 |Q(q_n)| \,. \label{Q2}
\end{align}
In fact, it follows from  \eqref{Mestimate2} and \eqref{qndef}  that
\begin{align*}
 |V(q_{n{+}1})|^{\frac{1}{1{-}\alpha}} & \leq |V(q_n)|^{\frac{1}{1{-}\alpha} } + |Q(q_n)|^{\frac{1}{1{-}\alpha}} |q_{n{+}1}-q_n|^{\frac{\alpha}{1{-}\alpha}}\\
& \leq C_{\nu}  + |Q(q_n)|  \leq C |Q(q_n)|\,.
\end{align*}
Inserting \eqref{Mestimate1} into  \eqref{Mestimate2} we obtain for $q \in [q_{n{+}1},q_n]$, taking also into account that $|Q(q)|$ is increasing for decreasing $q$, that 
\begin{equation}\label{Mestimate3}
 {\cal M}(f,f)(q) \leq C \eps \big( 1 + |Q'(q)| + |Q(q)|^2\big)
\end{equation}
As a consequence, we obtain that $Q$ satisfies for $q \in [q_{n{+}1},q_n]$ 
\[
 -Q'(q) = \big( 1 + a(q)\big) Q^2 \qquad \mbox{ with } |a(q)| \leq C \big( \eps + \eta + \nu\big)\,.
\]
Integrating this equation, we find
\begin{align}
 Q(q) &= \frac{Q(q_n)}{1- Q(q_n) \big( 1 + O(\eps+\eta+\nu)\big) (q-q_n)}\nonumber\\
& = \frac{Q(q_n)}{1- |Q(q_n)| \big( 1 + O(\eps+\eta+\nu)\big) |q-q_n|} \label{Qsolution}
\end{align}
and in particular, due to the monotonicity of $Q$ and the definition of the sequence $\{q_n\}$ in \eqref{qndef}, we 
deduce \eqref{Q1} and \eqref{Q2}.

Then
\begin{align*}
 q_{n+1} & = q_0 - \frac{1}{4} \Big( \frac{1}{|Q(q_0)|} + \cdots + \frac{1}{|Q(q_n)|}\Big)\\
& \geq q_0 - \frac{1}{4|Q(q_0)|} \Big( 1 + \frac{6}{7} + \Big( \frac{6}{7}\Big)^2 \cdots \Big) \to q_0 - \frac{7}{4|Q(q_0)|} \,. 
\end{align*}
As a consequence of this and \eqref{Q2}, we obtain that $Q$ blows up at a point $q^* \geq q_0-\frac{7}{4|Q(q_0)|}$. 

It remains to prove \eqref{singularity}.
We return to \eqref{Qsolution} to obtain
\[
 Q(q_{n+1}) = \frac{Q(q_n)}{ 1- \frac{1}{4} (1+ O(\eps + \eta +\nu ) )} = \frac{4}{3} Q(q_n) (1+ O(\eps+ \eta+\nu) )\,.
\]
Iterating this argument we find
\begin{align*}
 \Big( \frac 4 3 \Big)^{k-(n+1)} &|Q(q_{n+k})| \big( 1- O(\eps+\eta+\nu)\big)
\leq |Q(q_k)| \\
&\leq \Big( \frac 4 3 \Big)^{k-(n+1)} |Q(q_{n+k})| \big( 1+O(\eps+ \eta+\nu) \big)\,.
\end{align*}
As a consequence
\[
 q_{n+1}-q^* = \frac{1}{4} \sum_{k \geq n+1} \frac{1}{|Q(q_k)|} \geq \frac{1}{4 |Q(q_{n+1})|} \sum_{l=0}^{\infty} \Big( \frac 3 4 \Big)^l \frac{1}{1+C(\eps+\eta
+\nu)^l}
\]
and
\[
q_{n+1}-q^* = \frac{1}{4} \sum_{k \geq n+1} \frac{1}{|Q(q_k)|} \leq \frac{1}{4 |Q(q_{n+1})|} \sum_{l=0}^{\infty} \Big( \frac 3 4 \Big)^l \frac{1}{1-C(\eps 
 + \eta+\nu)^l}\,.
\]
Hence
\[
 \big(1- C(\eps+ \eta+\nu)\big) \frac{1}{|Q(q_{n+1})|} \leq q_{n+1}-q^* \leq \big(1+ C(\eps +  \eta+\nu)\big) \frac{1}{|Q(q_{n+1})|}\,.
\]
Since
\[
 |Q(q)| = \frac{|Q(q_n)|}{1+|Q(q_n)|(1+O(\eps+\eta+\nu))(q-q_n)} = \frac{1}{q-q_n} \Big( 1 + O(\eps + \eta+\nu)\Big) 
\]
we also find
\[
 |Q(q)| = \frac{1}{q-q^*}  \Big( 1 + O(\eps  + \eta+\nu)\Big) 
\]
and the proof of \eqref{singularity} is finished.
\end{proof}

\section{Uniqueness proof}
\label{S.uniqueness}

From now on we rescale the solution such that the singularity of its desingularized Laplace transform $Q$ is at $q=-1$. We denote the corresponding functions
again by $f$ and $Q$ respectively.

Since all the transforms are defined on the interval $(-1,\infty)$ we can define the following norm, that is particularly suited for our uniqueness proof:
\begin{equation}\label{normdef}
 \|Q\|:= \sup_{q>-1} \frac{1+q}{|q|}|Q(q)|\,.
\end{equation}

As a corollary of Lemmas \ref{L.qclose} and \ref{L.singularity} we obtain the following. 

\begin{lemma}\label{L.qgloballyclose}
 Given $\delta>0$ there exists $\eps>0$ such that 
\begin{equation}\label{smallness}
 \| Q-\bar Q\|\leq \delta\,.
\end{equation}
\end{lemma}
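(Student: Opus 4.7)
The plan is to partition $(-1,\infty)$ into three subintervals governed by small parameters $r,\eta>0$ (to be chosen) and to bound
\[
W(q) := \frac{1+q}{|q|}\,\bigl|Q(q)-\bar Q(q)\bigr|
\]
separately on each. Set $G := Q-\bar Q$ and recall that, after the rescaling of Section~\ref{S.uniqueness}, $Q$ has its singularity at $q^{\ast}=-1$, while $\bar Q(q)=q/(1+q)$ so that $(1+q)\bar Q(q)=q$. The normalization also gives $G(0)=0$ and $G'(0)=0$.

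Near the singularity, on $I_{1}:=(-1,-1+r)$, I would write
\[
(1+q)\,G(q) \;=\; (1+q)\,Q(q)-q \;=\; \bigl((1+q)Q(q)+1\bigr)\;-\;(1+q).
\]
Lemma~\ref{L.singularity} (applied with $q^{\ast}=-1$, after shrinking $r$ if necessary) gives $|(1+q)Q(q)+1|\leq \delta_{1}$, where $\delta_{1}$ is at our disposal for small $\eps$. Therefore $|(1+q)G(q)|\leq \delta_{1}+r$, and dividing by $|q|\geq 1-r$ bounds $W$ on $I_{1}$ by $(\delta_{1}+r)/(1-r)$.

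On the bounded middle piece $I_{2}:=[-1+r,-\eta]\cup[\eta,\infty)$, the geometric factor $(1+q)/|q|$ is bounded by a constant $C(r,\eta)$, and Lemma~\ref{L.qclose} with $\nu=r$ gives $\|G\|_{L^{\infty}(I_{2})}\leq \delta_{2}$, arbitrarily small for small $\eps$. Hence $W\leq C(r,\eta)\,\delta_{2}$ there. On the near-origin strip $I_{3}:=[-\eta,\eta]$, I would invoke the intermediate step of the proof of Lemma~\ref{L.qclose}, where the Gronwall argument applied to the representation \eqref{Grepresentation} yields the pointwise bound $|G(q)/q|\leq C_{\eta}\,\eps$ on $I_{3}$ rather than merely an $L^{\infty}$ bound on $G$. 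This gives
\[
W(q) \;=\; (1+q)\,\Bigl|\tfrac{G(q)}{q}\Bigr| \;\leq\; (1+\eta)\,C_{\eta}\,\eps
\qquad\text{for } q\in I_{3}\setminus\{0\},
\]
and the limit as $q\to 0$ is handled by the same inequality.

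To conclude, given $\delta>0$ I would first fix $r$ and $\eta$ small enough that the geometric factors in the three regional estimates are controlled, then choose $\eps$ small enough so that Lemmas~\ref{L.qclose} and~\ref{L.singularity} each deliver their smallness at level $\delta/3$, yielding $\|Q-\bar Q\|\leq \delta$. The main obstacle is the near-zero region $I_{3}$: the bare statement of Lemma~\ref{L.qclose} controls only $\|G\|_{L^{\infty}}$ and is useless against the $1/|q|$ weight at the origin. One really has to extract from its proof the stronger bound on $|G(q)/q|$, which is precisely tailored to the weighted norm \eqref{normdef}. Everything else is routine patching of these three regional estimates.
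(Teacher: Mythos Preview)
Your approach is exactly what the paper intends: it states only that the lemma is a corollary of Lemmas~\ref{L.qclose} and~\ref{L.singularity}, and your three-region decomposition is the natural way to make this precise. In particular, your observation that near $q=0$ one must extract the sharper bound $|G(q)/q|\le C_\eta\eps$ from inside the proof of Lemma~\ref{L.qclose} (rather than its stated $L^\infty$ conclusion) is correct and necessary for the weighted norm \eqref{normdef}.

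One slip: after the rescaling of Section~\ref{S.uniqueness} that places the singularity at $-1$, the mass is no longer exactly $1$; it becomes $-q^\ast$, where $q^\ast$ is the singularity of the original (mass-normalized) transform. Hence $G'(0)=Q'(0)-\bar Q'(0)=-q^\ast-1$, not $0$. The intermediate step you quote from Lemma~\ref{L.qclose} uses $\lim_{q\to 0}G(q)/q=0$ as the initial condition in \eqref{Grepresentation}; with $G'(0)\neq 0$ one picks up an extra term $G'(0)/(1+q)^2$ there, and the Gronwall argument yields only $|G(q)/q|\le C_\eta\bigl(\eps+|G'(0)|\bigr)$ on $I_3$. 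Since $|G'(0)|=|q^\ast+1|$ can itself be made arbitrarily small with $\eps$ by Lemma~\ref{L.singularity}, the estimate (and hence your proof) survives with the same structure, but the assertion $G'(0)=0$ is false as written and should be replaced by this smallness.
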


\subsection{The representation formula}

Our next goal is to derive a representation formula for $U:=Q-\bar Q$.  Then $U$ satisfies the equation
\begin{equation}\label{uequation}
 -q  U'(q) = \big( 2 \bar Q -1\big)U  +U^2 + {\cal M}(f,f)(q)\,
\end{equation}
and $U = o\big( \frac{1}{1+q}\big)$ as $q \to -1$.

\begin{lemma}\label{L.representation}
The solution to  \eqref{uequation} can be represented as  
\begin{equation}\label{urepresentation}
 U(q) = - \frac{q}{(1+q)^2} \int_{-1}^q  \frac{(1+s)^2}{s^2} \int_0^s  \,\psi(\eta)\,d\eta\,ds \qquad \mbox{ with } \psi= U^2 + {\cal M}(f,f)\,.
\end{equation}
 Furthermore, if $U_1$ and $U_2$ are two such solutions, then
\begin{equation}\label{udifference}
\begin{split}
 U_1(q)-U_2(q)& = - \frac{q}{(1+q)^2} \int_{-1}^q  \frac{(1+s)^2}{s^2} \Big( U_1(s)^2 - U_2(s)^2\Big)\,ds 
\\ & 
- \frac{q}{(1+q)^2} \int_{-1}^q  \frac{(1+s)^2}{s^2} \Big( {\cal M}(f_1,f_1)(s) - {\cal M}(f_2,f_2)(s)\Big)\,ds \,.
\end{split}
\end{equation}
\end{lemma}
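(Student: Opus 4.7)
The plan is to treat \eqref{uequation} as a linear first-order ODE in $U$, with the combined quantity $\psi := U^2 + \mathcal{M}(f,f)$ playing the role of an inhomogeneity, and to solve it by variation of parameters, anchoring the integration at the left endpoint $q = -1$.

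I would first use $\bar Q(q) = q/(1+q)$ to rewrite $2\bar Q - 1 = (q-1)/(1+q)$, and then identify the integrating factor $\mu(q) = (1+q)^2/q$ via the partial-fraction identity $\frac{q-1}{q(1+q)} = -\frac{1}{q} + \frac{2}{1+q}$. Equivalently, the substitution $U(q) = \frac{q}{(1+q)^2}V(q)$ transforms \eqref{uequation} into the simpler ODE $V'(q) = -\frac{(1+q)^2}{q^2}\psi(q)$. The assumed asymptotic $U = o(1/(1+q))$ as $q \to -1$ translates into the boundary condition $V(-1) = 0$; integrating $V'$ from $-1$ to $q$ and multiplying back by $q/(1+q)^2$ then produces the representation. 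To obtain the precise form of \eqref{urepresentation} displaying the inner antiderivative $\int_0^s \psi(\eta)\,d\eta$, I would integrate once by parts, shifting the derivative from $\psi$ onto $(1+s)^2/s^2$; this equivalent form has the further advantage of being manifestly regular at $s = 0$.

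For the difference formula \eqref{udifference}, I would simply apply the above representation to $U_1 = Q_1 - \bar Q$ and $U_2 = Q_2 - \bar Q$ separately and subtract. Because both profiles have been rescaled so that their Laplace-transform singularities coincide with that of $\bar Q$ at $q = -1$, the linear part of the ODE and hence the integrating factor $\mu$ are common to the two, so the subtraction cancels all linear contributions and produces precisely the stated combination of $U_1^2 - U_2^2$ and $\mathcal{M}(f_1,f_1) - \mathcal{M}(f_2,f_2)$ under the outer integral.

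The main technical check will be integrability of the integrand near both endpoints of $[-1,q]$. Near $s = -1$, the prefactor $(1+s)^2$ absorbs the singularity of $U^2$ by the hypothesis $U = o(1/(1+s))$, while $\mathcal{M}(f,f)$ is bounded on $(-1,\infty)$ by Lemmas \ref{L.apriori} and \ref{C.ublargex}. Near $s = 0$, the identities $U(0) = U'(0) = 0$ give $U^2 = O(s^4)$, and a Taylor expansion of the factors $1 - e^{-sx}$ inside \eqref{mdef} together with the finite second moment of $f$ (supplied by Lemma \ref{C.ublargex}) yields $\mathcal{M}(f,f)(s) = O(s^2)$, so the integrand remains bounded across $s = 0$ and the boundary term at $q_0 \to -1^+$ in the integrating-factor identity indeed vanishes, completing the derivation.
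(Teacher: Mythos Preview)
Your approach via the integrating factor $\mu(q)=(1+q)^2/q$ is exactly the paper's: rewrite $2\bar Q-1=1-\tfrac{2}{1+q}$, obtain $\big(\tfrac{(1+q)^2}{q}U\big)'=-\tfrac{(1+q)^2}{q^2}\psi$, and integrate from $-1$ using $U=o\!\big(\tfrac{1}{1+q}\big)$ to kill the boundary term. Your added integrability checks near $s=-1$ and $s=0$ are more careful than the paper, which simply asserts that \eqref{urepresentation} follows.

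One caution. The inner antiderivative $\int_0^s\psi(\eta)\,d\eta$ appearing in \eqref{urepresentation} is almost certainly a misprint: the paper's own derivation produces the single integral $-\tfrac{q}{(1+q)^2}\int_{-1}^q\tfrac{(1+s)^2}{s^2}\psi(s)\,ds$, and it is this single-integral form that is used in \eqref{udifference} and throughout the contraction argument. Your proposed integration by parts does \emph{not} reproduce the double-integral formula as printed, since $\big(\tfrac{(1+s)^2}{s^2}\big)'=-\tfrac{2(1+s)}{s^3}$, not $\tfrac{(1+s)^2}{s^2}$; so do not try to force a match---just record the single-integral representation, note (as you do) that $\psi(s)=O(s^2)$ makes the integrand bounded across $s=0$, and proceed.
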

\begin{proof}
Integrating the equation
\[
 -qU'(q)= (2\bar Q-1)U + \psi= \Big(1-\frac{2}{1+q}\Big) U +\psi
\]
gives
\[
 \Big( \frac{(1+q)^2}{q} U\Big)' = - \Big( \frac{1+q}{q}\Big)^2 \psi
\]
and thus \eqref{urepresentation} follows.
\end{proof}

\subsection{The contraction argument}

\begin{prop}
Let $U_1$ and $U_2$ be two solutions of \eqref{uequation} as in Lemma \ref{L.representation} then we have $U_1=U_2$ if $\eps>0$ is sufficiently small.
\end{prop}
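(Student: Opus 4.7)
The strategy is to take the weighted norm $\|\cdot\|$ of both sides of the representation formula \eqref{udifference} and show that the right-hand side is bounded by $\kappa \|U_1 - U_2\|$ with $\kappa = \kappa(\varepsilon) < 1$ for $\varepsilon$ sufficiently small. This forces $\|U_1 - U_2\| = 0$, whence $Q_1 = Q_2$, whence $f_1 = f_2$ by injectivity of the desingularized Laplace transform. The analysis of the two integrals in \eqref{udifference} has very different flavor.

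The quadratic term is the easier one. Writing $U_1^2 - U_2^2 = (U_1 + U_2)(U_1 - U_2)$ and applying Lemma \ref{L.qgloballyclose}, one has $|U_1(s) + U_2(s)| \leq 2\delta \tfrac{|s|}{1+s}$ with $\delta \to 0$ as $\varepsilon \to 0$, while by definition $|(U_1 - U_2)(s)| \leq \|U_1 - U_2\| \tfrac{|s|}{1+s}$. The weight $\tfrac{(1+s)^2}{s^2}$ in the integrand of \eqref{udifference} then cancels exactly, leaving an integrand uniformly bounded by $2\delta \|U_1 - U_2\|$ on the interval of integration. Carrying out the integration and applying the prefactor $\tfrac{q}{(1+q)^2}$ before taking $\sup_{q > -1} \tfrac{1+q}{|q|}$ gives a contribution of at most $2\delta \|U_1 - U_2\|$ to $\|U_1 - U_2\|$.

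The hard part is the ${\cal M}$-term. The symmetry of $W$ allows rewriting
\[
{\cal M}(f_1,f_1)(s) - {\cal M}(f_2,f_2)(s) = \tfrac{1}{2}\iint W(x,y)(f_1{+}f_2)(x)(f_1{-}f_2)(y)(1{-}e^{-sx})(1{-}e^{-sy})\,dx\,dy,
\]
isolating the signed measure $f_1 - f_2$, whose desingularized Laplace transform is exactly $U_1 - U_2$. To convert this bilinear integral into a quantity controlled by $\|U_1 - U_2\|$, I would use the bound $|W(x,y)| \leq C\varepsilon[(x/y)^\alpha + (y/x)^\alpha]$ from \eqref{kernel2}-\eqref{kernel3}, which provides a factorized majorant, together with the integral representations
\[
x^\alpha = \frac{\alpha}{\Gamma(1-\alpha)}\int_0^\infty \frac{1-e^{-\tau x}}{\tau^{1+\alpha}}\,d\tau, \qquad x^{-\alpha} = \frac{1}{\Gamma(\alpha)}\int_0^\infty \tau^{\alpha-1}e^{-\tau x}\,d\tau,
\]
valid for $0 < \alpha < 1$. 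After Fubini, a factor like $\int x^\alpha (f_1-f_2)(x)(1-e^{-sx})\,dx$ becomes an integral of $(U_1 - U_2)(\tau) + (U_1 - U_2)(s) - (U_1 - U_2)(\tau+s)$ against $\tau^{-1-\alpha}d\tau$, with an analogous (non-singular) expression for the $x^{-\alpha}$ factor. The companion factors involving $f_1 + f_2$ are controlled uniformly through the a priori estimates of Lemma \ref{L.regularity} (the negative-moment bound \eqref{negativemoment}) and the exponential decay from Lemma \ref{C.ublargex}. Delicate bookkeeping of the cancellations both as $\tau \to 0$ and as $s \to -1$ then yields the pointwise estimate
\[
|{\cal M}(f_1,f_1)(s) - {\cal M}(f_2,f_2)(s)| \leq C\varepsilon \|U_1 - U_2\| \cdot \frac{s^2}{(1+s)^2},
\]
so that, after the same multiplication and supremum as above, this term contributes at most $C\varepsilon \|U_1 - U_2\|$ to $\|U_1 - U_2\|$.

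Combining the two estimates gives $\|U_1 - U_2\| \leq (2\delta + C\varepsilon)\|U_1 - U_2\|$, and for $\varepsilon$ small enough the prefactor is strictly less than one, forcing $U_1 \equiv U_2$ and hence $f_1 = f_2$. The main obstacle is clearly the ${\cal M}$-step: the norm $\|\cdot\|$ captures $f_1 - f_2$ only through its desingularized Laplace transform, while ${\cal M}$ couples the two variables through $W$. The Mellin-type representations of $x^{\pm\alpha}$ as integrals against $1-e^{-\tau x}$ or $e^{-\tau x}$ are what bridge this gap, and it is the smallness $\varepsilon$ of the kernel perturbation that makes the resulting bilinear estimate compatible with the contraction scheme.
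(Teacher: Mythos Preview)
Your treatment of the quadratic term matches the paper exactly. The gap is in the ${\cal M}$-step. You propose to replace $W$ by the majorant $\eps[(x/y)^\alpha+(y/x)^\alpha]$ and then invoke Mellin-type representations to express factors like $\int y^{\alpha}(f_1-f_2)(y)(1-e^{-sy})\,dy$ in terms of $U_1-U_2$. But once you pass to $|W|$, the integrand carries $|f_1-f_2|$, not the signed difference, and the weak norm $\|U_1-U_2\|$ controls only $\int(1-e^{-qy})(f_1-f_2)(y)\,dy$; it says nothing about $\int(1-e^{-qy})|f_1-f_2|(y)\,dy$. So the Mellin step, which you apply to the \emph{signed} integral, cannot follow the absolute-value step. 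If instead you try to keep $W$ signed, then the Mellin representations of $y^{\pm\alpha}$ are no longer relevant: $W$ is an arbitrary homogeneous function satisfying only the bounds \eqref{kernel2}--\eqref{kernel0}, and there is no reason it admits a decomposition into exponentials with usable coefficients. Your sketch also never uses the differentiability assumption \eqref{kernel0}, which is essential in the paper.

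The paper takes a completely different route for this term. It does not attempt any pointwise bound on ${\cal M}(f_1,f_1)(s)-{\cal M}(f_2,f_2)(s)$; instead it formulates the required estimate directly on the integral (Proposition~\ref{P.main}) and proves it by contradiction and compactness. One writes the integral as $\iint W(x,y)(f_1{+}f_2)(x)(f_1{-}f_2)(y)\,H(q,x,y)\,dx\,dy$ with an explicit kernel $H$, for which uniform decay bounds are proved (Lemma~\ref{L.Hestimate}). Assuming the estimate fails along sequences $W_n,f_{1,n},f_{2,n},q_n$ with $\|U_{1,n}-U_{2,n}\|\to 0$, one uses \eqref{kernel0} to extract a locally uniform limit $W_n\to W_*$, and uses that $\|U_{1,n}-U_{2,n}\|\to 0$ forces $f_{1,n}-f_{2,n}\to 0$ \emph{weakly as measures}. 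After tail estimates (and a rescaling when $q_n\to -1$), the compact part of the integral then tends to zero, giving the contradiction. The key conceptual point you are missing is that the weak norm only yields weak convergence of $f_1-f_2$, and a compactness argument is what converts this into control of the bilinear ${\cal M}$-term for a general kernel $W$.
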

\begin{proof}
We deduce from  \eqref{udifference} that
\begin{equation}\label{prop1}
 \begin{split}
\|U_1-U_2\|&\leq \sup_{q>-1}  \frac{1}{q{+}1}  \int_{-1}^q  \frac{(1+s)^2}{s^2} \Big | U_1(s)^2 - U_2(s)^2\Big| \,ds\\
& \quad + \sup_{q>-1}  \frac{1}{q{+}1}\Big| \int_{-1}^q  \frac{(1+s)^2}{s^2} \Big( {\cal M}(f_1,f_1)(s) - {\cal M}(f_2,f_2)(s)\Big)\,ds \Big|\\
& =: (I) + (II)\,.
 \end{split}
\end{equation}
The first term is easy to estimate. In fact, using \eqref{smallness}, we find for sufficiently small $\eps$ that 
\begin{equation}\label{prop2}
 \begin{split}
|(I)|& \leq \sup_{q>-1} \frac{1}{(1{+}q)} \int_{-1}^q \Big( \|U_1\|+\|U_2\|\Big)  \|U_1-U_2\|  \,ds \\
& \leq \Big( \|U_1\|+\|U_2\|\Big)  \|U_1-U_2\|  \\
& \leq \frac 1 2 \|U_1-U_2\|  \,.
 \end{split}
\end{equation}
The main task is to derive a similar bound on the second term in \eqref{prop1}. 
We formulate this main result as a proposition and postpone its proof to the next section.

\begin{prop}\label{P.main}
For sufficiently small $\eps$ we have
\begin{equation}\label{main}
 \sup_{q>-1}  \frac{1}{1{+}q}\Big| \int_{-1}^q  \frac{(1+s)^2}{s^2} \Big( {\cal M}(f_1,f_1)(s) - {\cal M}(f_2,f_2)(s)\Big)\,ds \Big| \leq C \eps  \| U_1-U_2\|\,.
\end{equation}
\end{prop}
With Proposition \ref{P.main} the statement of the theorem follows.
\end{proof}

\subsection{Proof of  Proposition \ref{P.main}}

We first notice that it suffices to prove Proposition \ref{P.main} for $W(x,y)$ that satisfies \eqref{kernel2}-\eqref{kernel0} with $\eps=1$. The result then follows by
scaling. 
For the proof of Proposition \ref{P.main} we argue by contradiction. 
Suppose that \eqref{main} (with $\eps=1$) is not true. Then there exist sequences $\{W_n\}, \{f_{1,n}\}, \{f_{2,n}\}$ and $\{q_n\}$ such that, with $U_{i,n}$ denoting
the corresponding functions as above, 
\begin{equation}\label{assump1}
 \|U_{1,n} - U_{2,n}\| \to 0 \qquad \mbox{ as } n \to \infty
\end{equation}
and
\begin{equation}\label{assump2}
\frac{1}{q_n{+}1}\Big| \int_{-1}^{q_n}  \frac{(1+s)^2}{s^2} \Big( {\cal M}(f_{1,n},f_{1,n})(s) - {\cal M}(f_{2,n},f_{2,n})(s)\Big) \,ds\Big| \geq 1\,. 
\end{equation}

By our regularity assumption \eqref{kernel0} we can assume without loss of generality that there exists a function $W_*=W_*(x,y)$, satisfying \eqref{kernel1}-\eqref{kernel0} such that 
\begin{equation}\label{wnconvergence}
W_n \to W_* \qquad \mbox{ locally uniformly on } (0,\infty)^2\,.
\end{equation}

We now collect some a-priori estimates for solutions $f$.
\begin{lemma}
 Let $f$ be a solution to \eqref{eq1}. Then
\begin{align}
 \int_0^{\infty} f(x)\,dx & \leq 2 \,,\label{f1}\\
\int_0^1 \frac{f(x)}{x^{\alpha}}\,dx & \leq C_{\alpha}\,,\label{f2}\\
\Big| \int_0^{\infty}\big(1-e^{-qx}\big)f(x)\,dx \Big| &\leq \frac{2|q|}{1+q}\qquad \mbox{ for all } q>-1\,,\label{f3}\\
\int_R^{2R} e^x f(x)\,dx & \leq 4 R \qquad \mbox{ for   } R \geq \frac{1}{1-\log 2} \,, \label{f4}\\
\int_1^{\infty} \frac{e^x}{x^{3-\alpha}} f(x)\,dx &\leq C \,. \label{f5}
\end{align}

\end{lemma}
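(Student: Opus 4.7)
All five bounds follow by exploiting the already-established description of $Q$. Lemma \ref{L.qgloballyclose} gives $\|Q\|\le 1+\delta\le 2$ (since $\|\bar Q\|=1$), and Lemma \ref{L.regularity} supplies the small-$x$ regularity. Since the rescaling at the start of Section \ref{S.uniqueness} shifts the singularity of $Q$ to $q=-1$, Lemma \ref{L.singularity} sharpens to $(1+q)|Q(q)|\to 1$ as $q\to -1^+$, so both the large-$q$ and the small-$(1+q)$ behaviour of $Q$ is under control.

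Bounds \eqref{f1} and \eqref{f3} are immediate: \eqref{f3} is $\|Q\|\le 2$ rewritten, and letting $q\to\infty$ in that inequality (where $(1+q)/|q|\to 1$ and $Q(q)\to\int_0^\infty f$) yields \eqref{f1}. Bound \eqref{f2} is a verbatim restatement of \eqref{negativemoment} and needs no further argument.

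The substantive step is \eqref{f4}. For $q\in(-1,0)$ the integrand $1-e^{-qx}$ is negative, hence
\[
\int_0^\infty e^{-qx}f(x)\,dx \;=\; \int_0^\infty f(x)\,dx + |Q(q)|.
\]
Combined with \eqref{f1}, \eqref{f3} and the identity $|q|=-q$ for $q<0$, this gives the essential moment bound
\[
(1+q)\int_0^\infty e^{-qx}f(x)\,dx \;\le\; 2(1+q)+2|q|\;=\;2.
\]
Writing $e^x = e^{(1+q)x}\cdot e^{-qx}$ and using $e^{(1+q)x}\le e^{2R(1+q)}$ on $[R,2R]$ (valid since $1+q>0$) then yields
\[
\int_R^{2R}e^x f(x)\,dx \;\le\; \frac{2\,e^{2R(1+q)}}{1+q}.
\]
A choice such as $1+q=(\log 2)/R$, which is admissible precisely when $R\ge 1/(1-\log 2)$ (so that $q<0$), produces a bound of the form $CR$; matching the specific constant $4$ claimed in \eqref{f4} requires a slightly finer optimisation together with the sharp asymptotic $(1+q)|Q(q)|\to 1$ from Lemma \ref{L.singularity}.

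Finally, \eqref{f5} follows from \eqref{f4} by a dyadic decomposition of the tail:
\[
\int_1^\infty \frac{e^x}{x^{3-\alpha}}f(x)\,dx
\;\le\; \sum_{n=0}^\infty 2^{-n(3-\alpha)}\int_{2^n}^{2^{n+1}}e^x f(x)\,dx
\;\le\; 4\sum_{n=0}^\infty 2^{-n(2-\alpha)}\;<\;\infty,
\]
which converges since $2-\alpha>1$. The only real obstacle is the bookkeeping of constants in \eqref{f4}; the remaining four estimates are essentially immediate consequences of the preceding lemmas, reflecting the fact that once $Q$ is controlled both near its singularity and at infinity, all polynomial/exponential moments of $f$ are automatically tamed.
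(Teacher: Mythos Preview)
Your proof is correct and follows essentially the same route as the paper: \eqref{f1} and \eqref{f3} from $\|Q\|\le 2$, \eqref{f2} from Lemma~\ref{L.regularity}, \eqref{f4} by converting \eqref{f3} for $q$ near $-1$ into an exponential moment bound and choosing $1+q\sim 1/R$, and \eqref{f5} by dyadic summation of \eqref{f4}. Your worry about the exact constant $4$ is unwarranted---the paper's own argument (choosing $1+q=1/R$ and using $e^{-qx}-1\ge\tfrac12 e^{-qx}$ for $x\ge1$, $q<-\log 2$) also only yields a bound $CR$ with $C>4$, so the ``$4$'' in the statement should be read as a generic constant.
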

\begin{proof}
The first estimate \eqref{f1} and the third \eqref{f3} follow from \eqref{smallness}, the second \eqref{f2} has been proved in Lemma \ref{L.regularity}. We can now deduce
\eqref{f4} from \eqref{f3}. In fact, choosing $q<-\log 2$, we have
\[
\int_0^{\infty} \big( e^{-qx}-1\big) f(x)\,dx  = \Big | \int_0^{\infty} \big( 1- e^{-xq}\big) f(x)\,dx\Big| \geq \frac 1 2 \int_1^{\infty} e^{-qx} f(x)\,dx. 
\]
As a consequence we obtain
\[
\int_1^{\infty} e^{|q|x} f(x) \,dx \leq \frac{4}{1+q} \qquad \mbox{ for } q\in (-1,-\log 2)\,.
\]
Choosing now $1+q=\frac{1}{R}$ and $x \in (R,2R)$ estimate \eqref{f4} follows. 
Finally, estimate \eqref{f5} follows from \eqref{f4} via the usual dyadic argument, that is
\[
 \int_1^{\infty} \frac{e^x}{x^{3-\alpha}} f(x)\,dx \leq \sum_{n=0}^{\infty} \int_{2^n}^{2^{n{+}1}} \frac{e^x}{x^{3-\alpha}} f(x)\,dx \leq C \sum_{n=0}^{\infty} 
2^{n{+}1} 2^{-(3-\alpha)(n{+}1)} \leq C\,.
\]
\end{proof}
We now write
\begin{align*}
\frac{1}{q{+}1}&\int_{-1}^q \,ds \frac{(1+s)^2}{s^2} \Big( {\cal M}(f_1,f_1)(s) - {\cal M}(f_2,f_2)(s)\Big)\\
&= \int_0^{\infty} \int_0^{\infty} W(x,y) \big( f_1(x)+f_2(x)\big)\big(f_1(y)-f_2(y)\big) H(q,x,y)\,dx\,dy
\end{align*}
with
\begin{equation}\label{Hdef}
 H(q,x,y)= \frac{1}{1+q} \int_{-1}^q \frac{(1+s)^2}{s^2} \big(1-e^{-sx}\big) \big( 1-e^{-sy}\big)\,ds\,.
\end{equation}

\subsubsection{The case $q_n \to q^* \in (-1,\infty]$.}

Now assume that $q_n\to q^*\in (-1,\infty]$. In this case we can  use the following estimate for $H$. 
\begin{lemma}\label{L.Hestimate}
 For $q >-1+\frac{1}{L}$ we have
\[
 0 \leq H(q,x,y) \leq C_L \frac{\min(x,1)\min(y,1)}{1+(x+y)^3} e^{x+y}\,.
\]
\end{lemma}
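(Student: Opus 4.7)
Non-negativity is immediate: the factor $(1-e^{-sx})(1-e^{-sy})$ carries the sign of $s^2 \geq 0$ (since both factors change sign at $s=0$), and the remaining factors $(1+s)^2/s^2$ and $1/(1+q)$ (the latter positive since $q>-1$) are nonnegative, so $H\geq 0$. For the upper bound I would split $H = \frac{1}{1+q}(I_- + I_+)$ at $s=0$, with $I_+=0$ when $q\leq 0$, and use $1/(1+q)\leq L$ from the hypothesis $q>-1+1/L$. The workhorse pointwise bound is
\[
|1-e^{-\tau}| \leq \min(|\tau|,1)\,e^{\max(-\tau,0)} \qquad \text{for all } \tau\in\R,
\]
yielding $|1-e^{-sx}|\leq \min(|s|x,1)\,e^{|s|x}$ for $s<0$ and $0\leq 1-e^{-sx}\leq \min(sx,1)$ for $s\geq 0$, and similarly in $y$.

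For $I_-$, the substitution $s=-t$ gives
\[
I_- \leq \int_0^1 \frac{(1-t)^2}{t^2}\min(tx,1)\min(ty,1)\,e^{t(x+y)}\,dt,
\]
and I would then run a case analysis (WLOG $x\leq y$). The base identity obtained by two integrations by parts,
\[
\int_0^1 (1-t)^2 e^{tu}\,dt = \frac{2(e^u-1)}{u^3} - \frac{2}{u^2} - \frac{1}{u} \leq C\,\frac{e^u}{1+u^3},
\]
valid uniformly in $u\geq 0$, handles the subcase $x,y\leq 1$ directly since $\min(tx,1)\min(ty,1)=t^2 xy = \min(x,1)\min(y,1)\cdot t^2$. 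For $x\leq 1 < y$ I would split the $t$-integral at $t=1/y$: on $(0,1/y)$ the integrand reduces to $xy(1-t)^2 e^{t(x+y)}$ which is easily bounded by $Cx$; on $(1/y,1)$ the saturated $\min(ty,1)=1$ leaves $x(1-t)^2 e^{t(x+y)}/t$, which after the substitution $\tau=1-t$ and a further split at $\tau=1/2$ is controlled using $1/(1-\tau)\leq y$ on the outer piece. The subcase $1<x\leq y$ is analogous, splitting at both $t=1/y$ and $t=1/x$ and using $1/(1-\tau)^2 \leq x^2$ on the innermost interval. At each stage the estimate $e^u/(1+u^3)\geq c_0>0$ (uniform in $u\geq 0$) lets me absorb any bounded contribution into the target shape.

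For $I_+$, the bound $(1-e^{-sx})\leq \min(sx,1)$ eliminates any exponential growth, so in view of $e^u/(1+u^3)\geq c_0$ it suffices to show $I_+/(1+q)\leq C_L \min(x,1)\min(y,1)$. A parallel splitting at $s=1/\max(x,y)$ and $s=1/\min(x,y)$, combined with the prefactor $1/(1+q)$ taming the linear-in-$q$ growth of $\int_0^q (1+s)^2/s^2\,ds$ as $q\to\infty$, delivers this. The main technical obstacle is the subcase $1<x\leq y$ for $I_-$: the naive bound $(e^{tx}-1)(e^{ty}-1)\leq t^2xy\, e^{t(x+y)}$ overshoots the target by a factor $xy$, and extracting the sharp prefactor $\min(x,1)\min(y,1)=1$ together with the $1/(1+(x+y)^3)$ decay forces me to saturate $\min(tx,1)$ and $\min(ty,1)$ at $1$ on the appropriate subintervals, compensating the lost $tx$, $ty$ factors by the $1/t$-weights from $(1-t)^2/t^2$ (bounded by $\max(x,y)$ on the saturation intervals) or by the exponential decay in $\tau=1-t$. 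Keeping this bookkeeping consistent across all three cases is the bulk of the work.
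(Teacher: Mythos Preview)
Your treatment of $I_-$ is sound and matches the paper's approach: your key bound
\[
\int_0^1(1-t)^2 e^{tu}\,dt\ \leq\ \frac{Ce^u}{1+u^3}
\]
is exactly the substitution the paper carries out for the dominant region $s\sim -1$ in the case $x,y\geq 1$, and your case split on the sizes of $x,y$ (with further splitting of the $t$-integral at $1/x$, $1/y$) is the same organization the paper uses.

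The gap is in your handling of $I_+$. The assertion that
\[
\frac{I_+}{1+q}\ \leq\ C_L\,\min(x,1)\min(y,1)
\]
uniformly for $q>-1+1/L$ is false. Take $x=y=\epsilon\leq 1$ and let $q\to\infty$: on the range $s>1/\epsilon$ one has $\min(s\epsilon,1)=1$, so the integrand is bounded below by $\frac{(1+s)^2}{s^2}\cdot (1-e^{-1})^2$ and hence $\int_{1/\epsilon}^q\frac{(1+s)^2}{s^2}\,ds\sim q$. Dividing by $1+q$ leaves a contribution bounded below by a positive constant independent of $\epsilon$, whereas your target $\min(x,1)\min(y,1)=\epsilon^2$ is arbitrarily small. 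The ``prefactor $1/(1+q)$ taming the linear-in-$q$ growth'' therefore yields only a bound of order~$1$, not of order $\min(x,1)\min(y,1)$.

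This is not a flaw specific to your route: the inequality in the lemma is in fact false as written, since $H(q,x,y)$ tends to a positive constant as $q\to\infty$ for every fixed $x,y>0$, while the right-hand side is $\sim C_L\,xy$ for small $x,y$. The paper's own proof shares the same lacuna---its ``linearization'' for $x,y\leq 1$ yields only $H\leq Cxy(1+q)^2$, and its bound $(1-e^{-sx})(1-e^{-sy})\leq 1$ for large $s$ gives $H\leq C$; neither is $\leq C_L xy$ uniformly in $q$. The intended hypothesis is evidently a two-sided restriction $q\in(-1+1/L,L)$ (which suffices for the application when $q_n\to q^*\in(-1,\infty)$), and under that additional upper bound your argument for $I_+$ does go through.
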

\begin{proof}
If $x,y \leq 1$, the estimate is immediate by linearizing the function $1-e^{-sx}$. If $x,y \geq 1$, then the main contribution to the integral comes from the region
$s \sim -1$. In fact, if  $-1+1/L < q <-1/L$, then 
\begin{align*}
  H(q,x,y) &\leq C_L \int_{-1}^q (1+s)^2 e^{-s(x+y)}\,ds\\
& = C_L \frac{e^{x+y}}{(x+y)^3} \int_0^{(1+q)(x+y)} t^2 e^{-t}\,dt \\
& \leq C_L \frac{1}{1+(x+y)^3}e^{x+y}\,.
\end{align*}
In a neighborhood of $s=0$ we can again linearize, while for $s \geq \frac{1}{L}$ we just use the upper bound
$\big(1-e^{-sx}\big) \big( 1-e^{-sy}\big) \leq 1$. 

If e.g. $x \geq 1$ and $y \leq 1$, the result follows analogously.
\end{proof}

\begin{rem}
If $q_n \to q^* \in (-1,\infty)$, then it is obvious that $H(q_n,\cdot)$ converges locally uniformly in $(0,\infty)^2$ to $H(q^*,\cdot,\cdot)$.

If $q_n \to \infty$, then $H(q_n,\cdot)$ converges locally uniformly to $2$. 
\end{rem}

\bigskip
Then, if $q >-1+\frac{1}{L}$ and if $f$ is a solution to \eqref{eq1}, we have, using Lemma \ref{L.Hestimate}, that for large $R$
\begin{align*}
\int_0^{1/R} &\,dx \int_0^{\infty} dy W(x,y) f(x)f(y)  H(q,x,y)\\
&\leq C \int_0^{1/R}dx \int_0^{\infty} dy \Big( \Big(\frac{x}{y}\Big)^{\alpha} + \Big( \frac{y}{x}\Big)^{\alpha} \Big) f(x)f(y) \frac{x\min(y,1)}{1+(x+y)^3} e^{x+y}\\
& \leq C \int_0^{1/R} x^{1{-}\alpha} f(x)\,dx \, \int_0^{\infty} \Big( y^{-\alpha} + y^{\alpha}\Big) \frac{\min(y,1)e^y}{1+y^3} f(y)\,dy
\\ & \leq \frac{C}{R^{1{-}\alpha}}\,,
\end{align*}
where the last estimate follows from \eqref{f1} and \eqref{f5}.

Furthermore, using also \eqref{f4}, we arrive similarly at 
\begin{align*}
&\int_{R}^{\infty} \,dx \int_0^{\infty} dy W(x,y) f(x)f(y)  H(q,x,y)
\leq C \int_R^{\infty} \frac{x^{\alpha} e^x}{1+x^3} f(x)\,dx\\
& \leq C \sum_{n=0}^{\infty} \int_{R2^n}^{R2^{n{+}1}} \frac{e^x}{x^{3-\alpha}} f(x)\,dx \leq C \sum_{n=0}^{\infty}
\big( R 2^n\big)^{-3+\alpha} R 2^n \leq \frac{C}{R^{2-\alpha}} \sum_{n=0}^{\infty} 2^{n(-2+\alpha)} \leq \frac{C}{R^{2-\alpha}}\,.
\end{align*}
Hence, in order to arrive at a contradiction to \eqref{assump2}, it remains to show that for large but fixed $R$
\begin{equation}\label{prop3}
\int_{1/R}^R  \int_{1/R}^R  W_n(x,y) \big( f_{1,n}(x)+f_{2,n}(x)\big)\big(f_{1,n}(y)-f_{2,n}(y)\big) H(q_n,x,y)\,dx\,dy\to 0 \quad \mbox{ as } n \to \infty.
\end{equation}

Since $W_n$ and $H(q_n,\cdot,\cdot)$ converge locally uniformly to their respective limits and since assumption \eqref{assump1} in particular implies that
$f_{1,n}-f_{2,n} \to 0$ locally in the sense of measures, we find that
\[
 F_n(x):= \int_{1/R}^R W_n(x,y) H(q_n,x,y) \big(f_{1,n}(y)-f_{2,n}(y)\big) \,dy \to 0 \qquad \mbox{ as } n \to \infty 
\]
locally uniformly in $x$. Hence, we can derive \eqref{prop3} and we have proved a contradiction in case $q_n \to q^* \in (-1,\infty]$.

\subsubsection{The case $q_n \to -1$.}

This case is somewhat more difficult to treat. We introduce the rescaling
\begin{equation}\label{rescaling}
 X=(1+q_n)x \qquad \mbox{ and } g(X)=e^x f(x)\,.
\end{equation}
The integral (cf. \eqref{assump2}) for which we want to show that it converges to zero as $n \to \infty$ becomes
\begin{equation}\label{rescaledterm}
 \int_0^{\infty} \int_0^{\infty} W_n(X,Y) \big( g_{1,n}(X) + g_{2,n}(X)\big) \big( g_{1,n}(Y) - g_{2,n}(Y)\big) \tilde H(q_n,X,Y)\,dX\,dY
\end{equation}
with
\begin{equation}\label{Htildedef}
 \tilde H(q_n,X,Y)= \frac{e^{-(X+Y)/(1+q_n)}}{(1+q_n)^2} H\Big( q_n, \frac{X}{1+q_n}, \frac{Y}{1+q_n}\Big)\,.
\end{equation}

We are going to derive a-priori estimates for $g$ and $\tilde H$.

\begin{lemma}\label{L.gestimates}
 We have for any solution $f$ of \eqref{eq1} and $g$ defined as in \eqref{rescaling} that
\begin{align}
 \int_0^{\infty} e^{-\frac{X}{1+q_n}} g(X)\,dX & \leq C(1+q_n) \,, \label{g1}\\
\int_0^{2(1+q_n)} \frac{g(X)}{X^{\alpha}} \,dX &\leq C_{\alpha} (1+q_n)^{1-\alpha}\,, \label{g2}\\
\int_R^{2R} g(X)\,dX & \leq CR \qquad \mbox{ for all } R \geq 2(1+q_n)\,\label{g3}\\
\int_0^{\infty} \Big( Y^{\alpha} + Y^{-\alpha}\Big) \frac{\min(Y/(1+q_n),1)}{1+Y^3}g(Y)\,dY & \leq C\,.\label{g4}
\end{align}
\end{lemma}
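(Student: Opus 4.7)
The plan is to reduce each of the four estimates to the original variables $x$ by inverting the substitution $X=(1+q_n)x$, $g(X)=e^xf(x)$, $dX=(1+q_n)\,dx$, and then to invoke the a priori bounds \eqref{f1}--\eqref{f5}. For brevity write $\sigma=1+q_n$ (not to be confused with the smallness parameter $\eps$). All four claims translate into weighted integrals of $e^xf(x)$ against powers of $x$, and the powers of $\sigma$ in the statement arise purely from the Jacobian and the scaling of the weight $Y^{\pm\alpha}$.

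Estimate \eqref{g1} is immediate: the substitution gives $\int_0^\infty e^{-X/\sigma}g(X)\,dX=\sigma\int_0^\infty f(x)\,dx$, bounded by $2\sigma$ via \eqref{f1}. For \eqref{g2} one obtains $\sigma^{1-\alpha}\int_0^2 x^{-\alpha}e^xf(x)\,dx$; since $e^x$ is bounded on $(0,2)$, this is controlled by \eqref{f2} on $(0,1)$ together with $\int_1^2 f\,dx\leq C$ from \eqref{f1}. Estimate \eqref{g3} becomes $\sigma\int_{R/\sigma}^{2R/\sigma}e^xf(x)\,dx$; when $R/\sigma\geq 1/(1-\log 2)$ the bound $4R$ follows directly from \eqref{f4}, while on the bounded range $R/\sigma\in[2,1/(1-\log 2)]$ the factor $e^x$ is uniformly controlled and \eqref{f1} together with $R\geq 2\sigma$ suffices.

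The main work is \eqref{g4}. I would split $[0,\infty)$ into three regions according to the position of $Y$ relative to $\sigma$ and $1$. On $Y\in[0,\sigma]$ one has $\min(Y/\sigma,1)=Y/\sigma$ and $1+Y^3\asymp 1$; the dominant term $Y^{-\alpha}\cdot Y/\sigma = Y^{1-\alpha}/\sigma$ produces, after substitution, $\sigma^{1-\alpha}\int_0^1 x^{1-\alpha}e^xf(x)\,dx$, bounded by \eqref{f2}. On $Y\in[\sigma,1]$ one has $\min(Y/\sigma,1)=1$ and $1+Y^3\asymp 1$; substitution yields $\sigma^{1\pm\alpha}\int_1^{1/\sigma}x^{\pm\alpha}e^xf(x)\,dx$, each handled by a dyadic decomposition in which \eqref{f4} gives $\int_{2^n}^{2^{n+1}}x^{\pm\alpha}e^xf(x)\,dx\leq C2^{n(1\pm\alpha)}$, and the geometric sum up to $2^n\leq 1/\sigma$ is of order $\sigma^{-(1\pm\alpha)}$, exactly cancelling the prefactor.

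The delicate region is the tail $Y\in[1,\infty)$, where the integrand is bounded by $Y^{\alpha-3}g(Y)$ and substitution gives $\sigma^{\alpha-2}\int_{1/\sigma}^\infty x^{\alpha-3}e^xf(x)\,dx$. A direct appeal to \eqref{f5} only bounds the inner integral by a constant, which leaves the divergent prefactor $\sigma^{\alpha-2}$; this is the single genuinely nontrivial point of the proof. Instead I would exploit \eqref{f4} dyadically: writing $\int_{2^n/\sigma}^{2^{n+1}/\sigma}x^{\alpha-3}e^xf(x)\,dx\leq (2^n/\sigma)^{\alpha-3}\cdot 4(2^n/\sigma) = 4(2^n/\sigma)^{\alpha-2}$, and summing the resulting geometric series in $n$ (convergent since $\alpha<2$) yields an upper bound of order $\sigma^{2-\alpha}$ for the inner integral, precisely compensating the prefactor $\sigma^{\alpha-2}$ and delivering the uniform bound claimed in \eqref{g4}.
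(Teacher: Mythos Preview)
Your argument is correct. The only difference from the paper's proof is organisational: the paper stays in the rescaled variable $Y$ throughout, deriving \eqref{g3} directly from the Laplace-transform bound \eqref{f3} (rather than from \eqref{f4}) and then feeding \eqref{g3} into the dyadic argument for \eqref{g4}, whereas you repeatedly undo the substitution and work with \eqref{f4} in the original variable $x$. Since \eqref{f4} was itself obtained from \eqref{f3}, and since \eqref{g3} in $Y$ is exactly \eqref{f4} in $x$ after the change of variables, the two routes are equivalent computations in different coordinates. The paper's version is slightly tidier because once \eqref{g3} is available it handles the intermediate range $Y\in[2(1+q_n),1]$ and the tail $Y\in[1,\infty)$ in one stroke without any further reference to $\sigma$, while your version carries the prefactors $\sigma^{1\pm\alpha}$ and $\sigma^{\alpha-2}$ through and cancels them at the end; but nothing is lost or gained mathematically.
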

\begin{proof}
The first estimate \eqref{g1} follows from \eqref{f1} and the definitions in \eqref{rescaling}, while estimate \eqref{g2} is a consequence of \eqref{f2}.

To establish \eqref{g3} we deduce from \eqref{f3} that for $q<0$ we have
\begin{align*}
\int_0^{\infty} e^{-X\frac{1+q}{1+q_n}}g(X)\,dX & \leq 2|q| \frac{1+q_n}{1+q} + \int_0^{\infty} e^{-\frac{X}{1+q_n}}g(X)\,dX\\
& \leq   2|q| \frac{1+q_n}{1+q} + C(1+q_n)\,.
\end{align*}
We choose $R=\frac{1+q_n}{1+q}$ for any $q \in (-1,-1/2)$ to infer \eqref{g3}. 

Finally, we use the usual dyadic argument and \eqref{g3} to estimate
\begin{align*}
 \int_1^{\infty} Y^{\alpha-3} g(Y)\,dY & \leq \sum_{k=1}^{\infty} \int_{2^k}^{2^{k{+}1}} Y^{\alpha-3} g(Y)\,dY\\
& \leq C\sum_{k=1}^{\infty} 2^{k(\alpha-3)} 2^k \leq C \sum_{k=1}^{\infty} 2^{-k(2-\alpha)} \leq C
\end{align*}
as well as 
\[
 \int_{2(1+q_n)}^1 Y^{-\alpha} g(Y)\,dY\leq C \sum_{k=0}^{2^{-k} \geq 2(1+q_n)} \int_{2^{-(k{+}1)}}^{2^{-k}} Y^{-\alpha} g(Y)\,dY
\leq C \sum_{k=0}^{2^{-k} \geq 2(1+q_n)} 2^{k\alpha} 2^{-k} \leq C
\]
which together with \eqref{g2} gives \eqref{g4}.
\end{proof}

\begin{lemma}\label{L.Htildeestimates}
 \[
  \tilde H(q_n,X,Y) \leq C \frac{\min(X/(1+q_n),1)\min(Y/(1+q_n),1)}{1+(X+Y)^3}\,.
 \]
\end{lemma}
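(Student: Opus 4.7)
The plan is to substitute the definition \eqref{Hdef} of $H$ into \eqref{Htildedef} and then to use a single change of variables that absorbs all the singular $(1+q_n)$ factors. Explicitly,
\[
\tilde H(q_n,X,Y)=\frac{e^{-(X+Y)/(1+q_n)}}{(1+q_n)^3}\int_{-1}^{q_n}\frac{(1+s)^2}{s^2}\bigl(1-e^{-sX/(1+q_n)}\bigr)\bigl(1-e^{-sY/(1+q_n)}\bigr)\,ds.
\]
The key pointwise inequality I would invoke is that for $s\in[-1,0)$ and $u>0$,
\[
0\leq 1-e^{-su}=e^{|s|u}\bigl(1-e^{-|s|u}\bigr)\leq e^{|s|u}\min(u,1),
\]
using $1-e^{-v}\leq\min(v,1)$ for $v\geq 0$ and $|s|\leq 1$. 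Applied at $u=X/(1+q_n)$ and $u=Y/(1+q_n)$ it pulls out exactly the factor $\min(X/(1+q_n),1)\min(Y/(1+q_n),1)$, while the leftover exponential $e^{|s|(X+Y)/(1+q_n)}$ combines with the outer prefactor $e^{-(X+Y)/(1+q_n)}$ to give $e^{-(1+s)(X+Y)/(1+q_n)}$, since $|s|-1=-(1+s)$ for $s<0$.

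Next I would substitute $u=(1+s)(X+Y)/(1+q_n)$, so that $ds=(1+q_n)/(X+Y)\,du$ and $(1+s)^2=u^2(1+q_n)^2/(X+Y)^2$. The range $s\in[-1,q_n]$ becomes $u\in[0,X+Y]$, and together with the $(1+q_n)^{-3}$ prefactor all three powers of $(1+q_n)$ cancel exactly. Since $|s|\geq |q_n|$ and $q_n$ is close to $-1$ in the regime of this subsection, $1/s^2$ is bounded by an absolute constant. We are then reduced to
\[
\tilde H(q_n,X,Y)\leq C\,\frac{\min(X/(1+q_n),1)\min(Y/(1+q_n),1)}{(X+Y)^3}\int_0^{X+Y}u^2 e^{-u}\,du.
\]

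Finally, the remaining integral is bounded by $\int_0^{X+Y}u^2\,du=(X+Y)^3/3$ when $X+Y\leq 1$ and by $\int_0^\infty u^2e^{-u}\,du=2$ when $X+Y\geq 1$, so it is comparable to $\min((X+Y)^3,1)$. Dividing by $(X+Y)^3$ leaves $\min(1,(X+Y)^{-3})$, which is comparable to $1/(1+(X+Y)^3)$, yielding the claim. There is no serious obstacle; the only care needed is to track how the three factors of $(1+q_n)$ produced by the change of variables cancel the $(1+q_n)^{-3}$ prefactor, and to notice that the identity $|s|-1=-(1+s)$ is what links the prefactor to the new variable $u$.
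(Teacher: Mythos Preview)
Your argument is correct and is essentially the paper's approach: the paper simply says the estimate follows ``exactly as in the proof of Lemma \ref{L.Hestimate},'' and that proof uses precisely the change of variables $u=(1+s)(X+Y)/(1+q_n)$ together with the bound $\int_0^{X+Y}u^2e^{-u}\,du\leq C\min((X+Y)^3,1)$. Your unified treatment via $\min(u,1)$ is a slight streamlining of the case analysis there.

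One slip to fix: for $s\in[-1,0)$ the quantity $1-e^{-su}$ is \emph{negative}, not nonnegative, and the correct factorization is $1-e^{-su}=-e^{|s|u}\bigl(1-e^{-|s|u}\bigr)$. This does not affect the proof, since you only use the product $(1-e^{-sX/(1+q_n)})(1-e^{-sY/(1+q_n)})$, where the two minus signs cancel and your displayed bound for the product is valid.
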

\begin{proof}
Using the definitions of $\tilde H$, the estimate follows exactly as in the proof of Lemma \ref{L.Hestimate}.
\end{proof}

With these estimates we can control the regions near zero and infinity. Indeed, using \eqref{g2},  \eqref{g4}
and Lemma \ref{L.Htildeestimates}, we obtain
\begin{align*}
& \int_0^{1/R}dX \int_0^{\infty} dY W_n(X,Y) \big( g_{1,n}(X) + g_{2,n}(X)\big) \big( g_{1,n}(Y) - g_{2,n}(Y)\big) \tilde H(q_n,X,Y)\,dX\,dY\\
& \leq C \int_0^{1/R}dX\int_0^{\infty} dY \Big( \Big(\frac{X}{Y}\Big)^{\alpha} + \Big( \frac{Y}{X}\Big)^{\alpha}\Big) g_{1,n}(X)g_{2,n}(Y) \tilde H(q_n,X,Y)\\
& \leq C \sum_{j=0}^{\stackrel{2^{-j}\geq}{ R(1+q_n)}} \int_{\frac{2^{-(j+1)}}{R}}^{\frac{2^{-j}}{R}} dX \int_0^{\infty} dY \Big( \big(2^{j}RY\big)^{-\alpha}
+ \big(2^{j}R Y\big)^{\alpha}\Big) g_{1,n}(X)g_{2.n}(Y) \frac{\min(\frac{Y}{1+q_n},1)}{1+Y^3}\\
& \qquad + \int_0^{1+q_n}dX \int_0^{\infty} \Big( \Big(\frac{X}{Y}\Big)^{\alpha} + \Big( \frac{Y}{X}\Big)^{\alpha}\Big) \frac{X}{1+q_n} g_{1,n}(X) g_{2.n}(Y) 
\frac{\min(\frac{Y}{1+q_n},1)}{1+Y^3}\\
& \leq C \sum_{j=0}^{2^{-j} \geq R (1+q_n)} \big( 2^jR\big)^{-(1{+}\alpha)} + \big(2^{j} R\big)^{\alpha{-}1} + C (1+q_n)^{1-\alpha}\\
& \leq C \Big( R^{\alpha{-}1} + (1+q_n)^{1{-}\alpha}\Big)\,.
\end{align*}

Second, we estimate
\begin{align*}
 &\int_R^{\infty} dX \int_{1/R}^{\infty} dY W_n(X,Y) \big( g_{1,n}(X) + g_{2,n}(X)\big) \big( g_{1,n}(Y) - g_{2,n}(Y)\big) \tilde H(q_n,X,Y)\,dX\,dY\\
& \leq C \int_R^{\infty} dX \int_{1/R} ^{\infty} dY  \Big( \Big(\frac{X}{Y}\Big)^{\alpha} + \Big( \frac{Y}{X}\Big)^{\alpha}\Big)
 g_{1,n}(X)g_{2,n}(Y) \frac{\min(\frac{Y}{1+q_n},1)}{1+(X+Y)^3} \\
&=:  \int_R^{\infty} \int_{1/R}^R dY \cdots + C \int_R^{\infty} dX \int_R^{\infty} dY ... =: (I)+(II)
\end{align*}
Using \eqref{g3} we obtain
\begin{align*}
 (I)& \leq C 
\int_R^{\infty} dX \int_{1/R}^R \Big( \Big(\frac{X}{Y}\Big)^{\alpha} + \Big( \frac{Y}{X}\Big)^{\alpha}\Big) \frac{g_{1,n}(X)g_{2,n}(Y)}{X^3}\\
& \leq C \sum_{j=0}^{\infty} \sum_{k=0}^{2^{-k} \geq 1} \int_{R2^j}^{R2^{j{+}1}}dX \int_{R 2^{-(k{+}1)}}^{R2^{-k}} dY \Big( 2^{\alpha(j{+}k)} + 2^{-\alpha(j{+}k)}\Big)
\frac{g_{1,n}(X)g_{2,n}(Y)}{(R2^j)^3}\\
& \leq \frac{C}{R} \sum_{j=0}^{\infty} \sum_{k=0}^{\infty}\Big( 2^{\alpha(j{+}k)} + 2^{-\alpha(j{+}k)}\Big) 2^{-2j-k}\\
& \leq \frac{C}{R} \sum_{j=0}^{\infty} \sum_{k=0}^{\infty}
\Big( 2^{j(\alpha-2)} 2^{k(\alpha-1)} + 2^{-j(\alpha{+}2)} 2^{-k(1{+}\alpha)}\Big)\leq \frac{C}{R}\,.
\end{align*}
Furthermore, 
\begin{align*}
 (II)&\leq \sum_{j=0}^{\infty} \sum_{k=0}^{\infty} \int_{R2^j}^{R2^{j{+}1}} dX \int_{R2^k}^{R2^{k{+}1}}dY \Big( 2^{\alpha(j{-}k)} + 2^{\alpha(k{-}j)}\Big) \frac{g_{1,n}(X)
g_{2,n}(Y)}{R^3 (2^j+2^k)^3}\\
& \leq \frac{C}{R} \sum_{j,k=0}^{\infty} \Big( 2^{\alpha(j{-}k)} + 2^{\alpha(k{-}j)}\Big)\frac{2^{j+k}}{(2^j+2^k)^3}\\
& \leq \frac{C}{R} \int_1^{\infty} d\xi \int_1^{\infty}d\eta \frac{\big( \frac{\xi}{\eta}\big)^{\alpha} + \big( \frac{\eta}{\xi}\big)^{\alpha}}{(\xi+\eta)^3} \\
& \leq \frac{C}{R} \int_1^{\infty} \frac{dr}{r^2} \int_0^{\pi/2} d\theta
\big( \tan \theta^{\alpha} + \mbox{cotan} \theta^{\alpha}\big)
 \leq \frac{C}{R}.
\end{align*}
Thus it remains to show that
\begin{equation}\label{final}
 \int_{1/R}^RdX \int_{1/R}^R dY W_n(X,Y) \big (g_{1,n}(X) + g_{2,n}(X)\big) \big( g_{1,n}(Y) - g_{2,n}(Y)\big) \tilde H(q_n,X,Y) \to 0
\end{equation} 
as $ n \to \infty$.

\begin{lemma}\label{L.Htildeconverge}
As $q_n\to -1$ we have
\[
 \tilde H(q_n,X,Y) \to \frac{1}{(X+Y)^3} \int_0^{X+Y} \xi^2 e^{-\xi}\,d\xi \quad \mbox{ locally uniformly in } (X,Y) \in (0,\infty)^2\,.
\]
\end{lemma}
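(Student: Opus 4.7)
The plan is to carry out an explicit rescaling in $s$ that resolves the $1/\epsilon_n$ singularities, where $\epsilon_n := 1+q_n \to 0^+$. Unfolding the definitions of $\tilde H$ and $H$ gives
\[
\tilde H(q_n,X,Y) = \frac{e^{-(X+Y)/\epsilon_n}}{\epsilon_n^3}\int_{-1}^{q_n}\frac{(1+s)^2}{s^2}\bigl(1-e^{-sX/\epsilon_n}\bigr)\bigl(1-e^{-sY/\epsilon_n}\bigr)\,ds.
\]
Substituting $\beta=(1+s)/\epsilon_n$, so $s=-1+\epsilon_n\beta$ and $\beta\in[0,1]$, produces $ds = \epsilon_n\,d\beta$ and $(1+s)^2 = \epsilon_n^2\beta^2$, which exactly cancels the $\epsilon_n^3$ in the denominator and yields
\[
\tilde H(q_n,X,Y) = \int_0^1 \frac{\beta^2}{(1-\epsilon_n\beta)^2}\,e^{-(X+Y)/\epsilon_n}\bigl(1-e^{-sX/\epsilon_n}\bigr)\bigl(1-e^{-sY/\epsilon_n}\bigr)\,d\beta,
\]
with the understanding that $s=-1+\epsilon_n\beta$ in the integrand.

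Next I would expand the product as $1 - e^{-sX/\epsilon_n} - e^{-sY/\epsilon_n} + e^{-s(X+Y)/\epsilon_n}$ and observe that $-s/\epsilon_n = 1/\epsilon_n - \beta$, so that after multiplication by $e^{-(X+Y)/\epsilon_n}$ the four resulting exponentials are, respectively,
\[
e^{-(X+Y)/\epsilon_n},\qquad e^{-\beta X - Y/\epsilon_n},\qquad e^{-X/\epsilon_n - \beta Y},\qquad e^{-\beta(X+Y)}.
\]
On any compact $K\subset(0,\infty)^2$ there is $m>0$ with $X,Y\geq m$ on $K$, so the first three are bounded by $e^{-m/\epsilon_n}$ uniformly in $\beta\in[0,1]$ and $(X,Y)\in K$. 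Together with the uniform bound $\beta^2/(1-\epsilon_n\beta)^2 \leq 4$ (for $\epsilon_n\leq 1/2$), these three terms contribute $O(e^{-m/\epsilon_n})\to 0$ to the integral, uniformly on $K$.

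The surviving term gives
\[
\int_0^1 \frac{\beta^2}{(1-\epsilon_n\beta)^2}\,e^{-\beta(X+Y)}\,d\beta \longrightarrow \int_0^1 \beta^2 e^{-\beta(X+Y)}\,d\beta
\]
by dominated convergence, with the same dominating function $4$, and the convergence is uniform in $(X,Y)\in K$ by uniform continuity of $e^{-\beta(X+Y)}$ on $[0,1]\times K$. A final change of variable $\xi=\beta(X+Y)$ in the limit integral produces $(X+Y)^{-3}\int_0^{X+Y}\xi^2 e^{-\xi}\,d\xi$, which is the claimed formula, and local uniform convergence on $(0,\infty)^2$ follows.

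No deep obstacle arises; the whole computation is a rescaling plus a four-term expansion. The only mild subtlety is bookkeeping: verifying that all three error exponentials actually stay bounded by $e^{-m/\epsilon_n}$ with a constant $m>0$ depending only on the compact set $K$, and that the dominating function for dominated convergence is independent of $(X,Y)\in K$. Both are immediate once the substitution is written out, so the proof is essentially mechanical after the change of variables is set up.
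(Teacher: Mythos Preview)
Your proof is correct and follows essentially the same approach as the paper: both expand the product $(1-e^{-sX/\epsilon_n})(1-e^{-sY/\epsilon_n})$ into four exponentials, observe that after multiplication by $e^{-(X+Y)/\epsilon_n}$ three of them carry a factor bounded by $e^{-m/\epsilon_n}$ on compacta and hence vanish, and identify the surviving term with $\int_0^1 \beta^2 e^{-\beta(X+Y)}\,d\beta$ via the substitution $\beta=(1+s)/\epsilon_n$. The only cosmetic difference is that you perform this substitution at the very start (making the cancellation of $\epsilon_n^{-3}$ explicit), whereas the paper keeps the $s$-variable and invokes the rescaling only at the end; also, your justification of uniform convergence in $(X,Y)$ is more cleanly obtained by bounding $\bigl|(1-\epsilon_n\beta)^{-2}-1\bigr|\leq C\epsilon_n$ directly rather than appealing to uniform continuity.
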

\begin{proof}
We have
\begin{align*}
 \tilde H(q_n,X,Y) &= \frac{1}{(1+q_n)^3} \int_{-1}^{q_n} \frac{(1+s)^2}{s^2} \Big( 1 - e^{-s\frac{X}{1+q_n}}\Big) \Big( 1 - e^{-s\frac{Y}{1+q_n}}\Big)e^{-\frac{X+Y}{1+q_n}}\,ds\\
& = \frac{1}{(1+q_n)^3} \int_{-1}^{q_n}  \frac{(1+s)^2}{s^2} e^{- \frac{1+s}{1+q_n} (X+Y)}\,dx \\
&+ \frac{1}{(1+q_n)^3} \int_{-1}^{q_n}  \frac{(1+s)^2}{s^2}
\Big( 1 - e^{-\frac{sX}{1+q_n}} - e^{-\frac{sY}{1+q_n}}\Big) e^{-\frac{X+Y}{1+q_n}}\,ds\,.
\end{align*}
 We can estimate the second term on the right hand side  as
\begin{align*}
\Big| \frac{1}{(1+q_n)^3} &\int_{-1}^{q_n}  \frac{(1+s)^2}{s^2}
\Big( 1 - e^{-\frac{sX}{1+q_n}} - e^{-\frac{sY}{1+q_n}}\Big) e^{-\frac{X+Y}{1+q_n}}\,ds\Big|\\
&\leq \frac{1}{(1+q_n)^3} \int_{-1}^{q_n}  \frac{(1+s)^2}{s^2}\Big(e^{-\frac{X+Y}{1+q_n}} + e^{-X\frac{1+s}{1+q_n}} e^{-\frac{Y}{1+q_n}} + e^{-Y\frac{1+s}{1+q_n}} e^{-\frac{X}{1+q_n}}\Big)\,ds
\end{align*}
and since e.g. $e^{-X\frac{1+s}{1+q_n}} \leq C$ and $ e^{-\frac{Y}{1+q_n}} \to 0$ as $q_n \to -1$, we can deduce that the whole term converges to zero as $n \to \infty$.

On the other hand,
\[
 \frac{1}{(1+q_n)^3} \int_{-1}^{q_n}  \frac{(1+s)^2}{s^2} e^{- \frac{1+s}{1+q_n} (X+Y)}\,dx\\
= o(1) + \int_0^1 \xi^2 e^{-\xi(X+Y)}\,d\xi
\]
and the result follows after another rescaling of the integral on the right hand side.
\end{proof}

\begin{lemma}\label{L.gconverge}
 If $\|f_{1,n} - f_{2,n}\| \to 0 $ as $n \to \infty$, then $\int_0^{\infty} e^{-\theta X} \big (g_{1,n}(X)-g_{2.n}(X)\big) dX \to 0$ for all $\theta>0$. Hence 
$g_{1,n}-g_{2,n} \to 0$ weakly in the sense of measures.
\end{lemma}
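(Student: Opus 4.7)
The plan is to express the Laplace transform of $g_{1,n}-g_{2,n}$ in the $X$-variable as an evaluation of $U_{1,n}-U_{2,n}=Q_{1,n}-Q_{2,n}$ in the $q$-variable, and then read off smallness directly from the norm \eqref{normdef}.

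Concretely, for fixed $\theta>0$ I perform the change of variables $X=(1+q_n)x$ in the Laplace integral and use $g_{i,n}(X)=e^{x}f_{i,n}(x)$ to obtain
\[
\int_0^\infty e^{-\theta X}g_{i,n}(X)\,dX \;=\; (1+q_n)\int_0^\infty e^{-q_* x}f_{i,n}(x)\,dx, \qquad q_* := \theta(1+q_n)-1.
\]
Since $1+q_* = \theta(1+q_n)>0$ we have $q_*>-1$, and writing $e^{-q_* x}=1-(1-e^{-q_* x})$ together with $\int_0^\infty f_{i,n}=Q_{i,n}(\infty)$ and \eqref{desinglaplaceg} produces $(1+q_n)[Q_{i,n}(\infty)-Q_{i,n}(q_*)]$. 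Taking differences and using $Q_{1,n}-Q_{2,n}=U_{1,n}-U_{2,n}$ gives
\[
\int_0^\infty e^{-\theta X}(g_{1,n}-g_{2,n})(X)\,dX \;=\; (1+q_n)\bigl[(U_{1,n}-U_{2,n})(\infty)-(U_{1,n}-U_{2,n})(q_*)\bigr].
\]
From \eqref{normdef} one has $|U(q)|\leq \|U\|\,|q|/(1+q)$ for every $q>-1$, hence in the limit $|U(\infty)|\leq \|U\|$. The first contribution is therefore bounded by $(1+q_n)\|U_{1,n}-U_{2,n}\|$ and the second by $(|q_*|/\theta)\|U_{1,n}-U_{2,n}\|$ (using $1+q_*=\theta(1+q_n)$). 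Both vanish as $n\to\infty$, since $|q_*|=|\theta(1+q_n)-1|$ stays bounded for fixed $\theta$ as $q_n\to -1$.

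For the second conclusion I combine this pointwise convergence of Laplace transforms with the uniform estimate $\int_R^{2R}g_{i,n}\,dX\leq CR$ from \eqref{g3}, which implies that $\int_a^b g_{i,n}\,dX$ is uniformly bounded on every compact $[a,b]\subset(0,\infty)$ once $a\geq 2(1+q_n)$. Given $\phi\in C_c((0,\infty))$ with support in $[a,b]$, Stone--Weierstrass applied to the algebra generated by $\{e^{-\theta X}\}_{\theta>0}$ provides a uniform approximation of $\phi$ on $[a,b]$ by finite linear combinations of exponentials; combined with the uniform total-variation bound, this promotes the vanishing of the Laplace transforms to $\int\phi(g_{1,n}-g_{2,n})\,dX\to 0$, which is the required weak convergence of measures. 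The one point that needs care is that the factor $(1+q_n)$ introduced by the rescaling must cancel the $1/(1+q_*)$ blow-up of the norm estimate at $q_*$ near $-1$; the identity $1+q_*=\theta(1+q_n)$ makes this cancellation exact, so the hypothesis $\|f_{1,n}-f_{2,n}\|\to 0$ suffices even though $q_*\to -1$.
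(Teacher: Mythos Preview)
Your proof is correct and follows essentially the same route as the paper: both identify the Laplace integral of $g_{1,n}-g_{2,n}$ at $\theta$ with an evaluation of $Q_{1,n}-Q_{2,n}$ at the point $q_*=\tilde q_n:=\theta(1+q_n)-1$, and exploit the exact cancellation $1+q_*=\theta(1+q_n)$ against the weight in the norm \eqref{normdef}. The only cosmetic difference is that the paper bounds the ``at infinity'' contribution via \eqref{g1} rather than via $|U(\infty)|\leq\|U\|$ (these are the same quantity in $X$-variables), and its passage from Laplace-transform convergence to weak convergence of measures is in fact terser than yours.
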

\begin{proof}
 By the definitions we have 
\begin{align*}
 0& \leftarrow \sup_{q>-1}\frac{1+q}{|q|} \Big| \int_0^{\infty} \big(1-e^{-qx}\big) (f_{1,n}(x)-f_{2,n}(x)) \,dx\Big| \\
& = \sup_{q>-1} \frac{1+q}{|q|(1+q_n)} \Big| \int_0^{\infty} \big( 1-e^{- \frac{qX}{1+q_n}}\big) e^{-\frac{X}{1+q_n}} \big (g_{1,n}(X)-g_{2,n}(X)\big) dX \Big|\,.
\end{align*}
Given $\theta>0$ we define $\tilde q_n \to -1$ such that $\theta(1+q_n)=1+\tilde q_n$. Then,  using \eqref{g1}, \eqref{g2} and \eqref{g3}, we find
\[
 \Big| \int_0^{\infty} e^{- \theta X} \big (g_{1,n}(X)-g_{2,n}(X)\big) dX \Big|
 \leq o(1) + C \Big| \int_0^{\infty} e^{-\frac{X}{1+q_n}} \big (g_{1,n}(X)-g_{2,n}(X)\big) dX \Big|
 \to 0 
\]
as $n\to \infty$.
Since,  the left hand side is just  the Laplace transform of $g_{1,n}-g_{2,n}$, this proves the statement of the Lemma.
\end{proof}
With Lemmas \ref{L.Htildeconverge} and  \ref{L.gconverge} 
 we can deduce that \eqref{final} holds. This gives a contradiction to \eqref{assump2} and finishes the proof
of Proposition \ref{P.main}.

\bigskip
{\bf Acknowledgment.} The authors acknowledge support through the CRC 1060 {\it The mathematics of emergent effects } at the University of Bonn, that is funded through the
German Science Foundation (DFG).

\section{Appendix: Proof of Lemma \ref{L.ublargex}}

\begin{proof}
Dividing \eqref{eq1c} by $x$, integrating and changing the order of integration on the right hand side we derive
as a first a priori estimate that
\begin{align}
 \int_0^{\infty} x f(x)\,dx & = \int_0^{\infty} \int_0^{\infty} K(y,z) f(y) f(z) y \log \Big( \frac{y+z}{y}\Big)\nonumber\\
& \geq C \int_0^{\infty} \,dz \int_0^z \,dy f(y) f(z) K(y,z) y \log \Big( \frac{y+z}{y}\Big)\label{ublargex1}\\
& \geq C \int_0^{\infty} \,dz \int_0^z \,dy f(y) f(z) K(y,z)y\,.\nonumber
\end{align}

Next, we denote for $\gamma \geq 1$
\begin{equation}
\label{ublargex2}
 M(\gamma):=\int_0^{\infty} x^{\gamma} f(x)\,dx \,.
\end{equation}
Our goal is to show inductively that $M(\gamma) \leq \gamma^{\gamma} e^{A\gamma}$
for some (large) constant $A$.

To that aim we first  multiply \eqref{eq1b}  by  $x^{\gamma-2}$ with some $\gamma>1$ and after integrating we obtain 
\[
(\gamma{-}1) M(\gamma) = \tfrac 1 2 \int_0^{\infty}\int_0^{\infty} K(x,y) f(x) f(y)\big( (x+y)^{\gamma} - x^{\gamma} - y^{\gamma} \big)\,.
\]
By symmetry we also find
\begin{align*}
 M(\gamma)&= \frac{1}{\gamma{-}1} \int_0^{\infty} \,dx \int_0^x \,dy K(x,y) f(x) f(y) \Big( \big(x+y\big)^{\gamma} - x^{\gamma}\Big)\\
& = \int_0^{\infty} \,dx \int_0^{x/\gamma} \,dy \cdots + \int_0^{\infty} \,dx \int_{x/\gamma}^x \,dy \cdots\,.
\end{align*}
Due to \eqref{ubsmallx} we have
\begin{align}
 \int_0^1\,dx \int_0^x \,dy K(x,y) f(x) f(y) \Big( \big(x+y\big)^{\gamma} - x^{\gamma}\Big)&
\leq C \int_0^1 \,dx \int_0^x \,dy K(x,y) f(x) f(y)  x^{\gamma}\nonumber \\
& \leq C\int_0^1 x^{\alpha +\gamma}f(x)\int_0^x y^{1{-}\alpha} f(y)\,dy\,dx \label{ublargex4}\\
& \leq C \,. \nonumber
\end{align}
Using \eqref{ubsmallx} and  $(x+y)^{\gamma} - x^{\gamma} \leq c x^{\gamma-1}y$ for $ y \leq \frac{x}{\gamma}$, we find that
\begin{equation}\label{ublargex5}
\begin{split}
 \int_1^{\infty}  \int_0^{x/\gamma} K(x,y)  y x^{\gamma{-}1} f(x)f(y)\,dy\,dx&\leq \int_1^{\infty} \int_0^{x/\gamma} x^{\gamma+\alpha-1} y^{1-\alpha} f(x)f(y)\,dy\,dx 
\\ &\leq
 C M(\gamma+\alpha -1)\,,
\end{split}
\end{equation}
so that for the sum of both terms we can prove by induction that it is smaller than $1/2 \gamma^{\gamma} e^{A\gamma}$. 
It remains to estimate
\begin{align*}
\frac{C}{\gamma-1}&\int_1^{\infty} \,dx \int_{x/\gamma}^x\,dy   K(x,y) f(x) f(y) \big(x+y\big)^{\gamma}\\
& \leq \frac{C}{\gamma} \int_1^{\infty} \,dx \int_{x/\gamma}^x \,dy f(x) f(y) \big(x+y\big)^{\gamma} \Big( \frac{x}{y}\Big)^{\alpha}=:(*)\,.
\end{align*}
In the following $\{\zeta_n\} \subset (0,1]$ will be a decreasing sequence of numbers that will be specified later.
Then we define a corresponding sequence of numbers $\kappa_n$ such that given a sequence $\{\theta_n\} \subset (0,1)$, also to be
specified later, we have
\begin{equation}\label{kappandef}
 \big(x+y\big)^{\gamma}  \leq \kappa_n^{\gamma} x^{\gamma(1-\theta_n)} y^{\gamma \theta_n} \qquad \mbox{ for } \frac{y}{x} \in [\zeta_{n+1},\zeta_n]\,.
\end{equation}
Equivalently we have
\begin{equation}\label{kappadef1}
 \kappa_n = \max_{\zeta \in [\zeta_{n{+}1},\zeta_n]} \Big( \frac{1+\zeta}{\zeta^{\theta_n}}\Big)\,.
\end{equation}
With these definitions we have
\[
 (*) \leq \frac{C}{\gamma} \sum_{n=0}^{n_0(\gamma)} \kappa_n^{\gamma} \zeta_{n{+}1}^{-\alpha} M(\gamma(1-\theta_n)) M(\gamma \theta_n)\,,
\]
where $n_0(\gamma)$ is such that $\zeta_{n_0(\gamma)} = \frac{1}{\gamma}$. 

We  choose $\theta_n$ such that for $\psi_{\theta_n}(\zeta):= \log(1+\zeta)-\theta_n \log \zeta$ we have
\[
\min_{\zeta \in [\zeta_{n+1},\zeta_n]} \psi_{\theta_n}(\zeta) = \log(1+\zeta_n) - \theta_n \log(\zeta_n)\,.
\]
This is equivalent to 
\begin{equation}\label{thetandef}
\theta_n = \frac{\zeta_n}{1+\zeta_n}\,.
\end{equation}
We want to prove now by induction over $\gamma$ that $(*) \leq \frac{1}{2} \gamma^{\gamma} e^{A\gamma}$. Inserting the corresponding hypothesis, this reduces to showing that
\[
 \frac{C}{\gamma} \sum_{n=0}^{n_0} \zeta_{n{+}1}^{-\alpha} \exp \Big( \gamma \big( \max_{\zeta \in [\zeta_{n+1},\zeta_n]} \psi_{\theta_n}(\zeta)
+ \theta_n \log \theta_n + (1-\theta_n) \log (1-\theta_n)\big) \Big)\leq \frac 1 2\,.
\]
By our definition \eqref{thetandef} we have
\begin{align*}
 &\max_{\zeta \in [\zeta_{n+1},\zeta_n]} \psi_{\theta_n}(\zeta)
+ \theta_n \log \theta_n + (1-\theta_n) \log (1-\theta_n) \\
& = \min_{\zeta \in [\zeta_{n+1},\zeta_n]}\psi_{\theta_n}(\zeta)  + (\max-\min)_{\zeta \in [\zeta_{n+1},\zeta_n]}  \psi_{\theta_n}(\zeta)
+ \theta_n \log \theta_n + (1-\theta_n) \log (1-\theta_n) \\
& = (\max-\min)_{\zeta \in [\zeta_{n+1},\zeta_n]}  \psi_{\theta_n}(\zeta)\,.
\end{align*}
Thus we need to investigate

\begin{align*}
 (\max-\min)_{\zeta \in [\zeta_{n+1},\zeta_n]}  \psi_{\theta_n}(\zeta) &= \psi_{\theta_n}(\zeta_{n{+}1}) - \psi_{\theta_n}(\zeta_n)\\
& = \log \Big( \frac{1+\zeta_{n{+}1}}{1+\zeta_n} \Big) - \theta_n \log \Big( \frac{\zeta_{n{+}1}}{\zeta_n} \Big)\\
& = \log \Big( 1 + \frac{\zeta_{n{+}1}-\zeta_n}{1+\zeta_n}\Big) - \frac{\zeta_n}{1+\zeta_n} \log \Big( 1 + \frac{\zeta_{n{+}1}-\zeta_n}{\zeta_n}\Big)\,.
\end{align*}
Expanding the nonlinear terms we find
\begin{equation*}
V:=(\max-\min)_{\zeta \in [\zeta_{n+1},\zeta_n]}  \psi_{\theta_n}(\zeta) \leq C \Big( |\zeta_{n{+}1}-\zeta_n|^2 + \frac{(\zeta_{n{+}1}-\zeta_n\big)^2}{
\zeta_n}\Big)\,.
\end{equation*}
We now split $\{1,2,\cdots,n_0\}$ into a finite number of sets $\{1,2,\cdots,N_1\}$, $\{N_1+1, \cdots, N_2\}$, $\cdots$, $\{N_{k-1}+1, \cdots,N_k=n_0\}$ in the following way.

We first define 
\[
 \zeta_0 =1 \,, \quad \eta_0= 1+ \frac{1}{\sqrt{\gamma}}\,, \quad \zeta_n = \eta_0^{-n} \zeta_0\,, \quad \mbox{ for all } n \leq N_1
\]
where $N_1$ is such that $ \zeta_{n} \geq \frac{1}{\sqrt{\gamma}}$, that is we can choose $N_1 \sim \sqrt{\gamma} \log \gamma$.
With these definitions we find 
\[
 \Big| \frac{\zeta_{n{+}1} - \zeta_n}{\zeta_n}\Big| \leq \frac{C}{\sqrt{\gamma}} \qquad \mbox{ for all } 1\leq n\leq N_1
\]
and thus
\[
 \frac{1}{\gamma} \sum_{n=0}^{N_1} \zeta_{n{+}1}^{-\alpha} \exp \big( \gamma V\big) \leq \frac{C N_1}{\gamma} \gamma^{\alpha/2}  
\sim \gamma^{(\alpha-1)/2} \log \gamma \to 0 \quad \mbox{ as } \gamma \to \infty\,.
\]
Next, for $n \in (N_1,N_2]$ we define
\[
 \zeta_n=\eta_1^{-(n-N_1)} \zeta_{N_1}\,, \qquad \eta_1=1+\frac{1}{\gamma^{1/4}}\,.
\]
Then $\zeta_n \leq \frac{1}{\sqrt{\gamma}}$, such that $|\zeta_{n{+}1}-\zeta_n|^2 \leq \frac{C}{\gamma}$ and 
\[
\gamma V \leq C \gamma \Big( \zeta_n |\eta_1-1|^2 + \frac{1}{\gamma}\Big) \leq C\,.
\]
We need to determine $N_2$ such that
\[
(N_2-N_1) \frac{1}{\zeta_{N_2}^{\alpha} \gamma} \to 0 \qquad \mbox{ as } \gamma \to \infty\,.
\]
Making the ansatz $\zeta_{N_2} = \gamma^{-\sigma}$, that is $N_2 \sim \gamma^{1/4} \log \gamma$, this implies that we need
\[
 \frac{\gamma^{1/4} \gamma^{\alpha \sigma} \log \gamma}{\gamma} \ll 1\,
\]
and this needs $\alpha \sigma < \frac 3 4$. Hence $\sigma:= \min (1,\frac{3}{4\alpha})$. If $\sigma=1$ we are done, otherwise we need to iterate.

Thus, let us assume that $\sigma_1=\frac{3}{4\alpha} <1$.

We define for $k \geq 2$ the sequence $\sigma_{k+1} = \frac{1}{2\alpha} (1+\sigma_k)$.
Then, we define $\eta_{k+1}= 1+\frac{1}{\gamma^{(1-\sigma_{k+1})/2}}$ and $\zeta_n = \eta_{k+1}^{-(n-N_{k+1})} \zeta_{N_{k+1}}$ for $n \in (N_{k+1},N_{k+2}]$ with 
$\zeta_{N_{k+1}}=\gamma^{-\sigma_k}$. Then $N_{k+1}-N_k = \gamma^{(1-\sigma_k)/2} \log \gamma $ and we find that our sum is controlled by
$C \gamma^{(1-\sigma_k)/2 -1 + \alpha \sigma_{k+1}} \ll 1$ by our definition of $\sigma_{k+1}$. Since $\alpha<1$ we find after a finite number of steps that
$\sigma_k \geq 1$, and then we can stop the iteration.

It remains to show that \eqref{ublargex2} implies the pointwise estimate for $f$. Indeed, \eqref{ublargex2} implies for $R>0$ that
\[
 R^{\gamma} \int_{R}^{2R} f(x)\,dx \leq \int_{R}^{2R} x^{\gamma} f(x)\,dx \leq \gamma^{\gamma} e^{A\gamma}
\]
and thus
\[
 \int_R^{2R} f(x)\,dx \leq \exp\Big( \gamma (\log(\gamma)+ \log(R)) + A \gamma\Big)\,.
\]
The minimum of $\psi(\gamma):= \gamma (\log(\gamma) +\log(R)) + A \gamma$ is obtained for $\gamma = e^{-(A+1)} R$ 
and thus
\[
 \int_R^{2R} f(x)\,dx \leq \exp\Big( - e^{-(A+1)} R\Big)
\]
and obviously it follows  that there exists (another) $A>0$ such that
\begin{equation}\label{ublargex20}
 \int_R^{2R} f(x)\,dx \leq \exp\Big( - A R\Big)\,.
\end{equation}
Equation \eqref{eq1c} implies
\[
 x^2 f(x) = \int_0^x\,dy \int_{x{-}y}^x\,dz  K(y,z) y f(y)f(z) + \int_0^x\,dy \int_x^{\infty}\,dz  K(y,z) y f(y)f(z)\,.
\]
Now we use \eqref{negativemoment} and \eqref{ublargex20} to conclude
\begin{align*}
 \int_0^x\,dy \int_x^{\infty}\,dz  K(y,z) y f(y)f(z)& \leq C \int_0^x y^{1{-}\alpha} f(y)\,dy \int_x^{\infty} z^{\alpha} f(z)\,dz\\
& \leq C \sum_{n=0}^{\infty} \int_{2^nx}^{x^{n{+}1}x} z^{\alpha} f(z)\,dz\\
& \leq C \sum_{n=0}^{\infty} \big( 2^nx\big)^{\alpha} \exp \big(- A 2^n x\big)\\
& \leq C \exp \Big(-\frac{A}{2} x\Big)\,.
\end{align*}
Similarly, we can estimate by symmetry
\begin{align*}
 \int_0^x\,dy \int_{x{-}y}^x\,dz  K(y,z) y f(y)f(z) & \leq C \int_{x/2}^x \,dy \int_{x{-}y}^x \,dz K(y,z)y f(y) f(z)\\
& \leq C \int_{x/2}^x  y^{1{+}\alpha} f(y)\,dy \int_0^x z^{-\alpha} f(z)\,dz\\
& \leq C\int_{x/2}^x y^{1{+}\alpha}  f(y)\,dy \leq C \exp \Big( - \frac{A}{4} x\
\Big)\,,
\end{align*}
which implies the desired estimate.
\end{proof}

{\small
\bibliographystyle{alpha}%
\bibliography{../coagulation}%
}

\end{document}